\def\titlerunning#1{\gdef\titrun{#1}}
\def\author#1{\gdef\autrun{\def\and{\unskip, }#1}\gdef\@author{#1}}
\def\address#1{{\def\and{\\\hspace*{18pt}}\renewcommand{\thefootnote}{}%
  \footnote {#1}}%
\markboth{\autrun}{\titrun}}
\def\subjclass#1{{\renewcommand{\thefootnote}{}%
\footnote{\emph{Mathematics Subject Classification (2020):} #1}}}
\newtheorem{theorem}{Theorem}[section]
\newtheorem{lemma}[theorem]{Lemma}
\newtheorem{proposition}[theorem]{Proposition}
\theoremstyle{definition}
\newtheorem{remark}[theorem]{Remark}
\DeclareMathOperator{\Var}{Var}
\numberwithin{equation}{section}
\begin{document}

\newcommand{\R}{\mathbb{R}}
\newcommand{\I}{\mathbb{I}}
\newcommand{\Q}{\mathbb{Q}}
\newcommand{\Z}{\mathbb{Z}}
\newcommand{\N}{\mathbb{N}}
\newcommand{\eps}{\varepsilon}
\newcommand{\p}{\mathbb{P}}
\newcommand{\F}{\mathcal{F}}
\newcommand{\e}{\mathcal{E}}
\newcommand{\Oe}{\mathcal{E}^{OU}\!}
\newcommand{\E}{\mathbb{E}}
\newcommand{\mC}{\mathcal{C}}
\newcommand{\B}{\mathcal{B}}
\newcommand{\cdl}{c\`{a}dl\`{a}g }
\newcommand{\D}{\mathrm{D}}
\newcommand{\Li}{L_2^{\uparrow}}
\newcommand{\FC}{\mathcal{FC}}
\newcommand{\OXi}{\Xi^{OU}\!}
\newcommand{\OLambda}{\Lambda^{OU}}
\newcommand{\Deriv}{\mathop D}
\newcommand{\bDeriv}{{\mathop D}^\tau}
\renewcommand{\L}{\mathrm{L}}
\newcommand{\Cint}{K_{\Sigma,\Omega}}
\newcommand{\neumann}{\Sigma_{\operatorname{N}}}
\newcommand{\mus}{\lambda_\Sigma}
\newcommand{\mun}{\lambda_{\operatorname{N}}}
\newcommand{\DomE}{\mathcal {D}}
\newcommand{\Ric}{\operatorname{Ric}}
\newcommand{\id}{\operatorname{id}}

\newcommand{\stkout}[1]{\ifmmode\text{\sout{\ensuremath{#1}}}\else\sout{#1}\fi}

\def\one{\mathbb I}
\def\Bdelta{\Delta^\tau}
\def\bnabla{\nabla^\tau}

\baselineskip=17pt

\title{Spectral gap estimates for Brownian motion on domains with sticky-reflecting boundary diffusion}

\titlerunning{Spectral gap for domains with boundary diffusion}

\author{Vitalii Konarovskyi$^{\dagger}$, Victor Marx$^*$, and Max von Renesse$^*$}

\date{\today}

\maketitle

\address{$\dagger$ University of Hamburg, Bundesstraße 55, 20146 Hamburg, Germany; $*$ Universit\"{a}t Leipzig, Fakult\"{a}t f\"{u}r Mathematik und Informatik, Augustusplatz 10, 04109 Leipzig, Germany\\
E-mail: vitalii.konarovskyi@uni-hamburg.de, marx@math.uni-leipzig.de, renesse@uni-leipzig.de }

\subjclass{Primary 
26D10, 35A23, 34K08; Secondary 46E35, 53B25, 60J60, 47D07.}

\begin{abstract} Introducing  an interpolation method we derive lower bounds for the spectral gap for Brownian motion on general domains with sticky-reflecting boundary diffusion associated to the first nontrivial eigenvalue for the Laplace operator with corresponding Wentzell-type boundary condition. In the manifold case our proofs involve  novel applications of the celebrated Reilly formula. 

\medskip
\noindent
\textbf{Keywords:} spectral gap, 
Poincaré inequality, 
sticky-reflecting boundary diffusion, 
Wentzell boundary condition,
Reilly formula.
\end{abstract}

\section{Introduction and statement of main result}
Brownian motion on smooth domains with sticky-reflecting  diffusion along the boundary has a long history, dating back at least to  Wentzell \cite{MR121855}. The induced probability evolution interpolates between Neumann heat flow inside the domain and tangential heat flow along boundary as marginal cases. 

In concise form such a process is determined by the associated of Feller generator $(A,\mathcal D(A))$ on the Banach space  $C_0(\overline \Omega)$ of continuous functions vanishing at infinity on a  smooth manifold with boundary $ \overline \Omega= \Omega \cup \partial \Omega$, where
\begin{equation}
\label{eq:gendef}
    \begin{aligned} 
 A f &=  \Delta f \one_{\Omega} + \left( \beta  \Bdelta f - \gamma \frac {\partial f}{\partial \nu }\right)\one_{\partial \Omega},\\
 \mathcal D(A) &=  \{ f \in C_0(\overline \Omega) \, : \, Af \in C_0(\overline \Omega) \}
 \end{aligned}
 \end{equation}
Here  
$\frac{\partial}{\partial \nu}$ is the outer normal derivative, $\Bdelta$ is the Laplace-Beltrami operator on the boundary $\partial \Omega$. The parameters $\beta,\gamma \in[0,\infty)$ denote the diffusion resp.\ inward sticky reflection rate at the boundary. For $\gamma =0$ the trajectories of the process will eventually collapse to a conventional Brownian motion along the boundary. The limiting case $\gamma=\infty$ corresponds to instantaneous boundary reflection, i.e.\ macroscopically standard Neumann heat flow in the bulk. Physically, such models correspond to heat conduction in materials with special coating.

Motivated by stochastic applications,
our interest in this work is a lower estimate on the first non trivial eigenvalue 
of the operator $-A$. By construction, the eigenvalue $\sigma$ appears simultaneously in two coupled PDE for the interior and the boundary. More specifically, $\sigma \geq 0$ is an eigenvalue, if for some $f\in \mathcal D(A)$ 
\begin{equation} \label{eq:full_ev_pde}
    \begin{cases}
\Delta f + \sigma f = 0 & \mbox{ in } \Omega,\\
\beta \Bdelta f - \gamma \frac{\partial f}{\partial \nu} + \sigma f =0 & \mbox{ on } \partial \Omega.
\end{cases}
\end{equation}

To the best of our knowledge, the spectral problem  has not been studied previously in this form. In contrast to classical Robin, Steklov, or Wentzell-type eigenvalue problems, the spectral parameter enters \eqref{eq:full_ev_pde} simultaneously in the interior equation and explicitly in the boundary condition, leading to a genuinely coupled spectral structure.

The goal is to give lower bounds on $\sigma$,  using minimal geometric information on $\Omega$ and $\partial \Omega$ in form of lower curvature bounds  and upper bounds on the Poincar\'e constants $C_\Omega$ and $C_\partial$ of the bulk resp.\ the boundary. 

\bigskip 

Below we establish such estimates for $\beta=1$, the cases $\beta >0$ and $\beta =0$ can be treated in an analogous fashion. To this aim it is convenient to introduce the reparametrization of $\gamma$ in terms of $\alpha\in [0,1]$ via 
\begin{equation}\label{eq:def_of_alpha}
\gamma = \frac \alpha {1-\alpha} \frac {|\partial \Omega|}{|\Omega|}. 
\end{equation} 

Let $\Ric$ be the Ricci curvature of $\Omega$ and $\mathrm{II}$ be the second fundamental form on the boundary $\partial \Omega$. The main result of this work reads then as  follows. 
\begin{theorem}\label{thm_main} Let $\Omega$ be a smooth compact  $d$-dimensional  Riemmannian manifold with boundary $\partial \Omega$ such that for some $k_R, k_2 >0$  
\[\Ric|_{\Omega}\geq k_R \id \quad \text{and} \quad \mathrm{II}|_{\partial \Omega} \geq k_2 \id. 
\]
Then for $\sigma=\sigma_\alpha$ in \eqref{eq:full_ev_pde}
 with $\alpha \in [0,1]$ given by~\eqref{eq:def_of_alpha}   
it holds that
\[
\sigma_\alpha \geq 
\max \left\{\frac 1 {M_{1,\alpha}},\frac 1 {M_{2,\alpha}}\right\},
\]
where 
\[M_{1,\alpha}=
 \max \left\{C_\Omega + \frac{(1-\alpha)(d-1)}{d k_R} ,
\frac{C_\partial}{d k_R}\cdot \frac{2(1-\alpha) dk_R{|\partial \Omega|} +\alpha d k_2 k_R C_\Omega{|\Omega|} +\alpha(1-\alpha)(d-1)k_2{|\Omega|} }{2(1-\alpha){|\partial \Omega|}+\alpha  k_2 C_{\partial}{|\Omega|}} \right\}
\] 
and 
\[ M_{2,\alpha}  =   \max\left\{\frac{(3d-1)(1-\alpha) }{d\alpha k_2}\frac{|\Omega|}{|\partial \Omega |}, \frac{d-1}{d k_R}\right\}.\]
\end{theorem}

\begin{remark}
(i) 
From the proof below it follows that Theorem~\ref{thm_main} remains true when we use any upper bound for $C_\Omega$ and $C_\partial$ in the definition of $M_{1,\alpha}$. 
    
(ii) It is possible to recast the problem \eqref{eq:full_ev_pde} as an eigenvalue problem on $\partial \Omega$ involving the Dirichlet-to-Neumann map for the (singular) shifted operator $\Lambda_\sigma = \Delta + \sigma$. This establishes a similarity to the Steklov eigenvalue problem and its variants (cf. literature review below), but in this representation the problem becomes nonlinear in $\sigma$. We will not pursue this approach and work with energy arguments instead. However, the classical Steklov eigenvalue of $\Omega$ will appear as a key component of our interpolation approach below.

(iii) Our theorem above is formulated under strict positivity assumption for the Ricci curvature of the bulk and the second fundamental form of the boundary. As it is known from classical eigenvalue estimates, the passage to general lower bounds may change the situation significantly, and other arguments need to be used, cf. \cite{Bormann2026StickyReflecting,Bormann2026Functional,bormann2025cheegertypeinequalitydriftlaplacian,wang2025superweakpoincareinequalities}

(iv) The two lower bounds involving $M_{1,\alpha}$ and $M_{2,\alpha}$ are obtained in different ways. $M_2$ is obtained by  direct application of the Reilly formula to the eigenfunction associated to  $\sigma_\alpha$ and makes no use of $C_\Omega$ and $C_\partial$. However, the resulting estimate is poor in many cases. Therefore we use a second method based on the interpolating nature of the Process to relate $C_\alpha$ with $C_\Omega$ and $C_\partial$ to give the sharper bound $M_1$. The latter is what we call \textit{interpolation method} which can be applied in more general settings as will be shown be examples further below.
\end{remark}

The structure of the paper is as follows. In section \ref{sec:direct} we establish the basic notation and variational setup for the analysis of the problem. We derive the estimate $M_2$ via a direct application of the Reilly formula. Section \ref{sec:interp_method} contains the presentation of the basic interpolation method, which is used in section in in detail to establish the second bound $M_1$. A critical ingredient is the boundary-bulk Poincar\'e estimate  
\[
\int_\Omega f^ 2  dx \leq \frac {d-1}{d  k_R} \int_\Omega  |\nabla f|^2 dx, \]
for all  $f\in C^1(\Omega)$ with  $\int_{\partial \Omega} f dS =0$, which we establish with an independent appiccation of the Reilly formula. 
In section \ref{Sec:examples} we demonstrate that the interpolation method can also be used successfully in cases when $k_R=0$ through the example of Euclidean balls, including the situation of tangential diffusion along only parts of the boundary and plain reflection elsewhere. A final example discusses the problem in case of a  diffusion on a stratified space. We conclude in section \ref{subsection:cont_section}  with a discussion for sufficient conditions on the continuity of $C_\alpha$ at $\alpha \in \{0,1\}$, which may fail, in general.

\subsection*{Literature Review}

\textit{1. Constructions of diffusions with dynamic boundary conditions
(stochastic vs.\ analytic).}
Diffusions with boundary interaction and sticky reflection have a long history, dating back at least to Wentzell~\cite{MR121855}.
On the stochastic side, rigorous process constructions on special domains were obtained in
\cite{MR126883,MR929208,MR287612} and later extended to more general settings, including jump--diffusion processes cf.\ \cite{MR245085}; see also~\cite{MR4176673}.
Related models with $\beta>0$ appear in interacting particle systems with singular boundary behaviour or zero-range interaction, see
\cite{MR4096131,MR2198199,MR2215623,konarovskyi2017reversible,nonnenmacher2018overdamped}.
In symmetric situations, an efficient construction via Dirichlet forms was provided in~\cite{MR3498008,grothaus,Ullrich:2025}. 
Maximal regularity properties of the associated semigroups was studied in~\cite{MR4065110}. 
The large-deviation properties of sticky-reflecting diffusions and Wasserstein gradient-flow structure of  the associated Fokker--Planck evolutions are addressed in the recent works   \cite{CasterasMonsaingeonSantambrogio2024,CasterasMonsaingeonNenna2025,BormannMonsaingeonRengerVonRenesse2025}.

\smallskip \noindent \textit{2. Spectral problems with generalized boundary conditions 
(Robin, Steklov and Wentzell-type).}

For context,  we recall three other 'classical ' eigenvalue problems in spectral geometry. 
\begin{itemize}
    \item 
The Robin Laplacian eigenvalue problem 
\[
\begin{cases}
-\Delta u = \sigma u & \text{in } \Omega,\\
\partial_\nu u + \beta u = 0 & \text{on } \partial\Omega,
\end{cases}
\]
which has been studied intensively in the literatre, see \cite{BucurFreitasKennedy2017} for a review.

\item 
The Steklov eigenvalue problem,
\[
\begin{cases}
\Delta u = 0 & \text{in } \Omega,\\
\partial_\nu u = \sigma u & \text{on } \partial\Omega,
\end{cases}
\]
which provides the spectral problem for the Dirichlet-to-Neumann map; 
see, e.g., the survey \cite{MR3662010}
\item The 'Wentzell eigenvalue' problem 
\[
\begin{cases}
\Delta u = 0 & \text{in } \Omega,\\
-\beta \Delta_{\partial\Omega} u + \partial_\nu u = \sigma u 
& \text{on } \partial\Omega,
\end{cases}
\]
which was studied recently in e.g.\ \cite{DambrineKatebLamboley2016Wentzell} and \cite{XiaWang2018}.
Kennedy~\cite{Kennedy2010RobinWentzell}
studies a variant of this, where $\Delta_{\partial\Omega} u$ is replaced by the boundary trace of $\Delta_{\Omega} u$,  under the name of a ('generalized') Wentzell boundary condition.
\end{itemize}
The eigenvalue problem of this work \eqref{eq:full_ev_pde} differs structurally from all of the above, since $\sigma$ appears both in the interior equation and explicitly in the boundary condition, where it is coupled to the tangential Laplace–Beltrami operator. In fact, as suggested by the gradient-flow perspective on the associated dynamics \cite{CasterasMonsaingeonSantambrogio2024,BormannMonsaingeonRengerVonRenesse2025} problem \eqref{eq:full_ev_pde} has the structure of an eigenvalue problem on a closed manifold with degenerate geometry on codimension-one subsets. -- Methodologically, our analysis below is particularly inspired by the works \cite{Kennedy2010RobinWentzell,kolesnikov,MR3951758} on manifolds with boundary, where the Reilly formula plays a major role.

\section{Direct Approach} \label{sec:direct}
 \subsection{Symmetrizing measure and  variational formulation}

In what follows we consider the situation of a bounded smooth domain $\Omega$. In this case the operator $A$ acting on smooth functions $C^\infty(\overline \Omega)$ is symmetric with respect to the measure 
\[
\lambda_\alpha = \alpha \lambda_\Omega + (1-\alpha) \lambda_\partial
\] 
on $\overline\Omega$, where $\lambda_\Omega$ and $\lambda_\partial$ denote the normalized volume and Hausdorff-measures on $\Omega$ and $\partial \Omega$, and  the parameter $\alpha \in [0,1]$ is determined by
\[ \gamma = \frac \alpha {1-\alpha} \frac {|\partial \Omega|}{|\Omega|} .
\] 

Hence, the first nonzero eigenvalue/spectral gap is characterized by the Rayleigh quotient 
\[ \sigma_{\alpha,\beta} = \inf_{\substack{f \in C^1 (\overline \Omega)\\ \Var_{\lambda_\alpha}(f)>0}} \frac{\mathcal E_{\alpha, \beta}(f)}{\Var_{\lambda_\alpha}f},\]
where 
\[ \Var_{\lambda_\alpha}f =\int_{ \overline\Omega} f^2d\lambda_\alpha-\left(\int_{ \overline\Omega} fd\lambda_\alpha\right)^2\]
and 
\[ \mathcal E_{\alpha, \beta}(f) = \alpha \int_{\Omega} \|\nabla f\|^2 d \lambda_\Omega + (1- \alpha)  \int_{\partial \Omega} \beta \|\bnabla f\|^2 d \lambda_\partial, \]
 and $\bnabla$ denotes the tangential derivative operator on $\partial \Omega$.  
 This representation of $\sigma_{\alpha,\beta}$  formally interpolates between the two marginal cases of the spectral gap for reflecting Brownian motion on $\Omega$ when $\alpha=1$ and tangential  Brownian motion on  $\partial \Omega$  (with diffusion rate $\beta$) when $\alpha=0$. In the sequel we will consider only the case of $\beta =1$ for simplicity. Using Rellich embedding and the classical Agmon-Douglas-Nirenberg theory \cite{Agmon:1959} we find a unique smooth minimizer $f$.

\subsection{Estimate from Reilly Formula}

The goal of this section is to proof the lower bound of $\sigma_\alpha$ by $\frac{1}{M_{2,\alpha}}$ claimed in Theorem~\ref{thm_main}.  
Our proof is based on Reilly's formula (see~\cite{reilly})
\begin{equation}\label{reilly}
\begin{aligned}
\int_\Omega \left((\Delta f)^2 - \|\nabla^2 f \|^2 \right) dx
&=\int_\Omega \Ric (\nabla f, \nabla f) dx \\
&\quad
+\int_{\partial\Omega} \left( H ( \frac{\partial f}{\partial \nu})^2   +\mathrm{II} (\bnabla f, \bnabla f) +2 \Bdelta  f \frac{\partial f}{\partial \nu}\right)  dS
\end{aligned}
    \end{equation}
where $dx$ and $dS$ denote the Riemannian volume resp. surface measure on $\Omega$ and $\partial \Omega$, $\nabla^2 f$ is the Hessian of $f$ and $H$ is the mean curvature of $\partial\Omega$ (\textit{i.e.} the trace of $\mathrm{II}$).

\begin{proposition}
Under the assumptions of Theorem~\ref{thm_main}, it holds that 
\begin{align}
    \label{def_M2}
 \sigma_\alpha \geq \min\left\{ \frac \alpha {1-\alpha}  \frac {d k_2 }{3 d -1 } \frac {|\partial \Omega|}{|\Omega|},\frac {d}{d-1}k_R\right\}. 
\end{align}
\end{proposition}

\begin{proof}  
Let $f$ be an eigenfunction of $-A$, i.e.  \[
\begin{cases}
\Delta f + \sigma f = 0 & \mbox{ in } \Omega\\
\Bdelta f - \gamma \frac{\partial f}{\partial \nu} + \sigma f =0 & \mbox{ on } \partial \Omega,
\end{cases}\]
where $\gamma = \frac \alpha {1-\alpha} \frac{|\partial \Omega|}{|\Omega|}$.  We apply  Reilly's formula~\eqref{reilly} to the corresponding eigenfunction $f$. In this case, for the l.h.s. we estimate 
\begin{align*}
\int_\Omega \left((\Delta f)^2 - \|\nabla^2 f \|^2 \right) dx
& \leq \frac{d-1}{d} \int_\Omega (\Delta f)^2 dx= - \frac {d-1}{d}\sigma \int_\Omega f \Delta f dx \\
& = \frac{d-1}{d}\sigma \int_\Omega \|\nabla f\|^2   dx - \frac {d-1}{d}\sigma \int_{\partial\Omega}  \frac{\partial f}{\partial \nu}f dS  \\
& = \frac{d-1}{d}\sigma \int_\Omega \|\nabla f\|^2   dx -
 \frac {d-1}{d}\frac{\sigma}{\gamma} \int_{\partial\Omega} (\Bdelta f +  {\sigma} f) f dS\\
& = \frac{d-1}{d}\sigma \int_\Omega \|\nabla f\|^2   dx + 
 \frac {d-1}{d}\frac{\sigma}{\gamma} \int_{\partial\Omega} \|\bnabla f\|^2  dS -  \frac {d-1}{d}\frac{\sigma^2 }{\gamma} \int_{\partial\Omega}  f^2   dS\\
 & \leq   \frac{d-1}{d}\sigma \int_\Omega \|\nabla f\|^2   dx + 
 \frac {d-1}{d}\frac{\sigma}{\gamma} \int_{\partial\Omega} \|\bnabla f\|^2  dS. \end{align*}
Since  
\begin{align*}
\int_{\partial\Omega} \frac{\partial f}{\partial \nu} \Bdelta f dS & = \frac  1 \gamma \int_{\partial\Omega} (\Bdelta f + \sigma f) \Bdelta f dS\\
& = \frac 1  \gamma \int_{\partial\Omega}(\Bdelta f)^2 dS - \frac \sigma \gamma \int_{\partial\Omega} \|\bnabla f \|^2 dS
\geq - \frac \sigma \gamma \int_{\partial\Omega} \|\bnabla f \|^2 dS,
\end{align*}
the r.h.s. of~\eqref{reilly} is bounded from below by 
\begin{align*} 
 k_R \int_\Omega  \|\nabla f\|^2 dx    &- \frac {2 \sigma}  \gamma \int_{\partial\Omega} \|\bnabla f\|^2 dS + \int_{\partial\Omega} h \left|\frac{\partial f}{\partial \nu}\right|^2 dS + k_2  \int_{\partial\Omega} \|\bnabla f\|^2 dS
\\
& \geq k_R \int_\Omega \|\nabla f\|^2 dx   - \frac {2 \sigma}  \gamma \int_{\partial\Omega} \|\bnabla f\|^2 dS + k_2 \int_{\partial\Omega}  \|\bnabla f\|^2 dS. 
 \end{align*}
Combining the two bounds  for \eqref{reilly} yields 
\[ \left(\frac {d-1}{d } \sigma - k_R \right) \int_\Omega \|\nabla f\|^2 dx \geq \left(k_2-\frac {3d-1}{d} \frac \sigma \gamma \right) \int_{\partial\Omega} \|\bnabla f\| ^2 dS,\]
which implies that either 
\[ k_2 - \frac {3d-1}d \frac {\sigma}\gamma \leq 0,\quad \mbox{ i.e. }\quad  \sigma \geq \frac {d k_2 \gamma }{3 d -1 }\]
or 
\[ \frac{d-1}{d}\sigma - k_R \geq 0,\quad \mbox{ i.e. }\quad\sigma  \geq  \frac {d}{d-1}k_R,\]
which yields the claim by inserting the definition of $\gamma$. 
\end{proof}

\section{Approach via Interpolation}
\label{sec:interp_method}

\subsection{Energy Decomposition}\label{subsec:energy_decomposition}
The previous estimate via direct application of Reilly formula gives weak lower bounds for small $\alpha$, in case of small boundary, or $\Omega$ having almost flat parts.  One might ask for an alternative approach, which uses also (Neumann) spectral information on $\Omega$ and $\partial \Omega$. For later use it will be convenient to work with a slight generalization of the setup above. To this aim let $\Omega$ be an  open domain   in $\R^d$ or a Riemannian manifold with a piecewise smooth boundary  $\partial \Omega$.
Let $\Sigma$ be a smooth compact and connected subset of $\partial \Omega$. In most of the results below we will take $\Sigma=\partial\Omega$; the case $\Sigma\subsetneq\partial\Omega$ is mainly relevant when discussing the continuity of $C_{\alpha}$ and the case of a ball with needle.
We denote by $\partial \Sigma$ the boundary of $\Sigma$ in the space $\partial \Omega$, \textit{i.e.} $\partial \Sigma=\Sigma \cap \overline{\partial \Omega \backslash \Sigma}$. 
We consider  two probability measures $\lambda_\Omega$ and $\lambda_\Sigma$ with support $\Omega$ and $\Sigma$, which are absolutely continuous with respect to the Lebesgue and  the Hausdorff measures on $\Omega$ and  $\Sigma$, respectively.

Let $\Deriv: C^1(\Omega) \mapsto \Gamma^0(\Omega)$ and $\bDeriv: C^1(\partial \Omega) \mapsto \Gamma^0(\partial \Omega)$ denote given first order gradient operators mapping differentiable functions into the set $\Gamma^0(\Omega)$ resp.\ $\Gamma^0(\partial \Omega)$ of continuous  (tangential) vector fields  on $\Omega$ and  on $\partial \Omega$, and for $\alpha \in [0,1]$ let \begin{align*}
\lambda_\alpha&:= \alpha \lambda_\Omega + (1-\alpha) \lambda_\Sigma, \\
\mathcal E_\alpha (f) &:= \alpha \int_\Omega \| \Deriv f \|^2 d\lambda_\Omega+ (1-\alpha)  \int_{\Sigma} \| \bDeriv f \|^2 d\lambda_\Sigma, \quad  f \in \DomE_0,
\end{align*}
where $\DomE_0 \subset   \mC^1(\overline \Omega)$ is  dense in $C(\overline\Omega)$. We assume that  for $\alpha \in [0,1]$ the quadratic form $(\mathcal E_\alpha,\DomE_0)$ is a pre-Dirichlet form on $L^2(\overline \Omega,\lambda_\alpha)$ whose closure we shall denote by $(\mathcal E_\alpha,\DomE)$, c.f. \cite{grothaus} for details. We wish to estimate from above $\sigma_\alpha^{-1}=C_\alpha$, where $C_\alpha$ is the optimal Poincaré constant given by
\begin{equation}
  \label{equ_constant_c_alpha}
  C_\alpha:= \sup_{\substack {f \in \DomE_0 \\ \mathcal E_\alpha(f)>0}}  \frac{\Var_{\lambda_\alpha} f}{\mathcal E_\alpha (f)} .
\end{equation}
In the interpolation method presented below  it is assumed that $C_\alpha$ are known or can be estimated at the two extremals  $\alpha\in \{0,1\}$. For instance, when $\Deriv= \nabla$, $\bDeriv = \bnabla$ are the standard gradient resp.\ tangential gradient operators and $\lambda_\Omega$ and $\lambda_\Sigma$ are normalized Lebesgue resp. Hausdorff measures on $\Omega$ and $\Sigma \subset \partial \Omega$,   $C_\Omega:=C_1$ is the optimal Poincaré constant associated to the Laplace operator on $\Omega$ with Neumann boundary conditions, 
whereas  $C_\Sigma:=C_0$ is  the optimal Poincaré constant associated to the Laplace-Beltrami operator on $\Sigma$ with Neumann boundary conditions on $\partial\Sigma$. 

\smallskip 

The following proposition establishes an estimate of $C_\alpha$ in terms of $C_\Omega$ and $C_\Sigma$, where $C_\Omega$ and $C_\Sigma$ are upper bounds for the 
Poincaré constants $C_1$ and $C_0$, respectively. 
\begin{proposition}
\label{prop:interp_method}
Assume  there exists constants $\Cint$, $K_1, K_2$ such that for any $f \in \DomE_0$
\begin{align}
\Var_{\lambda_\Sigma} f
\leq  \Cint \int_\Omega \| \Deriv f \|^2 d\lambda_\Omega ,
\label{ineq Cint}
\end{align}
and 
\begin{align}
\left( \int_\Omega f d \lambda_\Omega - \int_\Sigma f d \lambda_\Sigma \right)^2
\leq K_1 \int_\Omega \| \Deriv f \|^2 d\lambda_\Omega + K_2 \int_\Sigma \| \bDeriv f \|^2 d\lambda_\Sigma,
\label{ineq K}
\end{align}
then it holds for any  $\alpha \in (0,1)$, 
\begin{align}
C_\alpha \leq
\max \left\{
C_\Omega + (1-\alpha)K_1 ,
\alpha K_2 ,
\frac{(1-\alpha)\Cint C_\Sigma +\alpha C_\Omega C_\Sigma+\alpha(1-\alpha)(\Cint K_2+C_\Sigma K_1)}{(1-\alpha) \Cint + \alpha C_\Sigma} \right\}.
\label{inequality_interpolation}
\end{align}
\end{proposition}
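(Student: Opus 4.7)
The variance $\Var_{\lambda_\alpha}f$ will be decomposed along the convex decomposition $\lambda_\alpha=\alpha\lambda_\Omega+(1-\alpha)\lambda_\Sigma$. Writing $m_\Omega:=\int f\,d\lambda_\Omega$ and $m_\Sigma:=\int f\,d\lambda_\Sigma$, so that the $\lambda_\alpha$-mean is $m_\alpha=\alpha m_\Omega+(1-\alpha)m_\Sigma$, a direct expansion gives the identity
\[
\Var_{\lambda_\alpha} f \;=\; \alpha\,\Var_{\lambda_\Omega} f \;+\; (1-\alpha)\,\Var_{\lambda_\Sigma} f \;+\; \alpha(1-\alpha)\,(m_\Omega-m_\Sigma)^2,
\]
the two cross-terms $(m_\Omega-m_\alpha)^2$ and $(m_\Sigma-m_\alpha)^2$ combining cleanly into the last summand with the weight $\alpha(1-\alpha)$.

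Next I would introduce the two Dirichlet integrals $A:=\int_\Omega\|\Deriv f\|^2\,d\lambda_\Omega$ and $B:=\int_\Sigma\|\bDeriv f\|^2\,d\lambda_\Sigma$, so that $\mathcal E_\alpha(f)=\alpha A+(1-\alpha)B$. The first variance is controlled by the Neumann Poincaré inequality $\Var_{\lambda_\Omega} f\le C_\Omega A$, the third term by the hypothesis \eqref{ineq K}, i.e.\ $(m_\Omega-m_\Sigma)^2\le K_1 A+K_2 B$, and for $\Var_{\lambda_\Sigma}f$ I have two competing bounds at my disposal: $C_\Sigma B$ from the Poincaré inequality on $\Sigma$, and $\Cint A$ from \eqref{ineq Cint}. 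The key device is to interpolate linearly between them: for every $\theta\in[0,1]$,
\[
\Var_{\lambda_\Sigma} f \;\le\; \theta\, C_\Sigma B \;+\; (1-\theta)\, \Cint A .
\]
Substituting everything yields
\[
\Var_{\lambda_\alpha}f \;\le\; P(\theta)\,A \;+\; Q(\theta)\,B,
\]
with $P(\theta)=\alpha C_\Omega+(1-\alpha)(1-\theta)\Cint+\alpha(1-\alpha)K_1$ and $Q(\theta)=(1-\alpha)\theta C_\Sigma+\alpha(1-\alpha)K_2$.

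To conclude $C_\alpha\le M$, it suffices that $P(\theta)\le\alpha M$ and $Q(\theta)\le(1-\alpha)M$, i.e.\ $M\ge\max\{P(\theta)/\alpha,\;Q(\theta)/(1-\alpha)\}$. Since $P/\alpha$ is affine decreasing and $Q/(1-\alpha)$ affine increasing in $\theta$, the inner minimum over $\theta\in[0,1]$ is attained either at an endpoint $\theta\in\{0,1\}$ or at the unique balance point $\theta^\star$ where $P(\theta)/\alpha=Q(\theta)/(1-\alpha)$. At $\theta=1$ one gets the upper bound $C_\Omega+(1-\alpha)K_1$; at $\theta=0$ one gets $\alpha K_2$ (after checking that the $A$-term is dominated in that regime); at $\theta=\theta^\star$, solving the linear equation
\[
\theta^\star\!\left(C_\Sigma+\tfrac{(1-\alpha)\Cint}{\alpha}\right)=C_\Omega+(1-\alpha)K_1+\tfrac{(1-\alpha)\Cint}{\alpha}-\alpha K_2
\]
and plugging back into $Q(\theta^\star)/(1-\alpha)=\theta^\star C_\Sigma+\alpha K_2$, the algebra collapses (the $\alpha^2 K_2 C_\Sigma$ contributions cancel) to the claimed fraction
\[
\frac{(1-\alpha)\Cint C_\Sigma+\alpha C_\Omega C_\Sigma+\alpha(1-\alpha)(\Cint K_2+C_\Sigma K_1)}{(1-\alpha)\Cint+\alpha C_\Sigma}.
\]
Taking the maximum of the three candidates yields \eqref{inequality_interpolation}.

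The only real obstacle is bookkeeping in the last step: one must verify that the three expressions arising from $\theta\in\{0,1,\theta^\star\}$ indeed dominate each other in the correct way, so that the maximum is a valid upper bound uniformly in $\alpha$, regardless of which regime $\theta^\star$ lies in. This reduces to an elementary monotonicity check on the two affine functions $P/\alpha$ and $Q/(1-\alpha)$; no further analytic input is needed beyond the three hypotheses \eqref{ineq Cint}, \eqref{ineq K} and the defining Poincaré inequalities of $C_\Omega,C_\Sigma$.
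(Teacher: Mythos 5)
Your proof is correct and follows essentially the same strategy as the paper: decompose $\Var_{\lambda_\alpha}f$ via $\Var_{\lambda_\alpha}f = \alpha\Var_{\lambda_\Omega}f + (1-\alpha)\Var_{\lambda_\Sigma}f + \alpha(1-\alpha)(m_\Omega - m_\Sigma)^2$, interpolate the two available bounds for $\Var_{\lambda_\Sigma}f$ with a free parameter, and then minimize the max of the two resulting coefficients over that parameter, arriving at the same three-way case analysis and the same algebraic collapse at the balance point. Your parameter $\theta$ is simply $1-t$ relative to the paper; otherwise the argument, including the observation that the max of the three candidates is a valid upper bound regardless of which regime the crossing point falls into, matches the paper's proof.
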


\begin{proof}
By definition of $C_\Sigma$ and by~\eqref{ineq Cint}, for any $f \in \DomE_0$
\begin{align*}
\Var_{\lambda_\Sigma} f \leq t \Cint \int_\Omega \| \Deriv f \|^2 d\lambda_\Omega + (1-t) C_\Sigma \int_\Sigma \| \bDeriv f \|^2 d\lambda_\Sigma,
\end{align*}
for any $t \in [0,1]$. 
Let $\alpha \in (0,1)$. For any $f  \in \DomE_0$ and any $t \in [0,1]$
\begin{align*}
\Var_{\lambda_\alpha} f 
&= \alpha \Var_{\lambda_\Omega} f + (1-\alpha) \Var_{\lambda_\Sigma} f
+\alpha(1-\alpha) \left( \int_\Omega f d \lambda_\Omega - \int_\Sigma f d \lambda_\Sigma \right)^2 \\
&\leq \left( C_\Omega +\frac{(1-\alpha)t}{\alpha} \Cint + (1-\alpha) K_1 \right) \alpha \int_\Omega \| \Deriv f \|^2 d\lambda_\Omega \\
&\quad+ \left( (1-t) C_\Sigma +  \alpha K_2 \right) (1-\alpha)\int_\Sigma \| \bDeriv f \|^2 d\lambda_\Sigma.
\end{align*}
Therefore,
\begin{align*}
C_\alpha \leq \inf_{t \in [0,1]} \max\left\{C_\Omega +\frac{(1-\alpha)t}{\alpha} \Cint + (1-\alpha) K_1 ,(1-t) C_\Sigma +  \alpha K_2 \right\}.
\end{align*}
For any positive constants $a,b,c,d$, we have
\begin{align*}
\inf_{t \in [0,1]} \max\left\{a+bt,c-dt\right\}
= 
\begin{cases}
a &\text{if } c-a<0,\\
c-d &\text{if } c-a>b+d,\\
\frac{bc+ad}{b+d} &\text{if } 0\leq c-a \leq b+d.
\end{cases}
\end{align*}
Therefore
\begin{equation}\label{eq_estimate_of_C_alpha}
C_\alpha \leq
\begin{cases}
C_\Omega + (1-\alpha)K_1\\ \smallskip 
\phantom{C_\Omega (1-\alpha) } \text{\ \ if }  \alpha K_2-(1-\alpha)K_1 + C_\Sigma-C_\Omega <0,\\ \smallskip 
\alpha K_2 \\ \smallskip
\phantom{C_\Omega  (1-\alpha)} 
 \text{\ \ if }  \alpha K_2-(1-\alpha)K_1 -C_\Omega >  \frac{1-\alpha}{\alpha}\Cint,\\ \smallskip
\frac{(1-\alpha)\Cint C_\Sigma +\alpha C_\Omega C_\Sigma+\alpha(1-\alpha)(\Cint K_2+C_\Sigma K_1)}{(1-\alpha) \Cint + \alpha C_\Sigma} \\ 
\phantom{C_\Omega (1-\alpha) }  
     \text{\ \ if } 0 \leq \alpha K_2-(1-\alpha)K_1 + C_\Sigma-C_\Omega 
     \leq C_\Sigma + \frac{1-\alpha}{\alpha}\Cint.
\end{cases}
\end{equation}
The last term is equivalent to the announced result. 
\end{proof}

\begin{remark}
In some examples considered later, we have $K_2 = 0$. 
In this case, the second alternative in the general estimate~\eqref{eq_estimate_of_C_alpha} is excluded. 
If moreover 
\begin{equation}\label{eq_third_case_in_estimate}
C_\Omega + (1-\alpha)K_1 \le C_\Sigma,
\end{equation}
then the third alternative applies and we further estimate
\begin{equation}\label{eq_estimate_of_third_case}
\begin{split}
&\frac{(1-\alpha)\Cint C_\Sigma + \alpha C_\Omega C_\Sigma 
      + \alpha(1-\alpha) C_\Sigma K_1}
     {(1-\alpha)\Cint + \alpha C_\Sigma}\\
&\qquad\qquad=
C_\Sigma 
\frac{(1-\alpha)\Cint + \alpha\bigl(C_\Omega + (1-\alpha)K_1\bigr)}
     {(1-\alpha)\Cint + \alpha C_\Sigma}
\le C_\Sigma,
\end{split}
\end{equation}
where the inequality follows from~\eqref{eq_third_case_in_estimate}. 
Consequently,
\[
C_\alpha \le \max\left\{ C_\Omega + (1-\alpha)K_1, C_\Sigma \right\}.
\]
This provides a simpler upper bound for $C_\alpha$. 
However, it is in general weaker than~\eqref{inequality_interpolation} for large $C_\Sigma$. 
For instance, when $\Omega$ is the unit ball 
(see Section~\ref{subsec:ball_full}), the original estimate yields a strictly better bound. Alternatively, we can estimate the right hand side of \eqref{eq_estimate_of_third_case} by $C_\Omega + (1-\alpha)K_1+\frac{1-\alpha}{\alpha}K_{\Sigma,\Omega}$. This leads to the inequality
\[
C_\alpha\le C_\Omega + (1-\alpha)K_1+\frac{1-\alpha}{\alpha}K_{\Sigma,\Omega}
\]
which also is weaker than~\eqref{inequality_interpolation} for $C_\Sigma$ close to $C_\Omega + (1-\alpha)K_1$.
\end{remark}

\subsection{Application to manifold case}

In this section, we complete the proof of Theorem~\ref{thm_main}. 
More precisely, we show that $C_\alpha \le M_{1,\alpha}$, where $C_\alpha = \sigma_\alpha^{-1}$.
This inequality  will be obtained via Proposition~\ref{prop:interp_method} 
and the subsequent two propositions. 
To this end, we specialize the general framework of 
Proposition~\ref{prop:interp_method} by taking 
$\Sigma = \partial \Omega$, $D = \nabla$, $D^\tau = \nabla^\tau$, 
$\lambda_\Sigma = \lambda_\partial$, and letting $\lambda_\Omega$ 
denote the normalized Lebesgue measure on $\Omega$.

\begin{proposition} \label{prop:K_one_manif}
Under the assumptions of Theorem~\ref{thm_main}, the inequality~\eqref{ineq K} holds with 
$K_2 = 0$ and
\begin{align*}
K_1 = \frac{d-1}{d k_R}.
\end{align*}
\end{proposition}

\begin{proof}
Our goal is to obtain an lower bound of
\begin{align*}
\inf_{\substack {f \in C^1(\overline \Omega) }}  \frac{\int_\Omega \| \nabla f \|^2 d\lambda_\Omega}{\left( \int_\Omega f d \lambda_\Omega - \int_{ \partial\Omega} f d \lambda_{ \partial} \right)^2},
\end{align*}
using Reilly's formula. We note  that
\begin{align*}
\inf_{\substack {f \in C^1(\overline \Omega) }}  \frac{\int_\Omega \| \nabla f \|^2 d\lambda_\Omega}{\left( \int_\Omega f d \lambda_\Omega - \int_{ \partial\Omega} f d \lambda_{\partial} \right)^2} 
=\inf_{\substack {f \in C^1(\overline \Omega)\\ \int_{ \partial\Omega} f d \lambda_{ \partial}=0  }}  \frac{\int_\Omega \| \nabla f \|^2 d\lambda_\Omega}{\left( \int_\Omega f d \lambda_\Omega  \right)^2} 
\geq
\inf_{\substack {f \in C^1(\overline \Omega)\\ \int_{ \partial\Omega} f d \lambda_{ \partial}=0  }}  \frac{\int_\Omega \| \nabla f \|^2 d\lambda_\Omega}{ \int_\Omega f^2 d \lambda_\Omega } =:\sigma. 
\end{align*}
Let $f \in C^1(\overline \Omega)$  be a minimizer for $\sigma$, which exists due to the standard argument based on Rellich-Kondrachov Theorem and the regularity property of solutions to elliptic equations (see e.g. \cite{Agmon:1959}). Then $\int_{ \partial\Omega} f d \lambda_{ \partial}=0$ and 
\begin{align*}
\int_\Omega \nabla f \cdot \nabla \xi d\lambda_\Omega
=\sigma \int_\Omega f \xi d\lambda_\Omega
\end{align*}
for each $\xi \in C^1(\overline \Omega)$ with $\int_{ \partial\Omega} \xi d \lambda_{ \partial}=0$. By integration by parts, the latter equality is equivalent to
\begin{align*}
-\int_\Omega \Delta f   \xi d\lambda_\Omega + \frac{|{ \partial\Omega}|}{|\Omega|} \int_{ \partial\Omega} \frac{\partial f}{\partial \nu} \xi d\lambda_{ \partial}
=\sigma \int_\Omega f \xi d\lambda_\Omega
\end{align*}
for each $\xi \in C^1(\overline \Omega)$ satisfying $\int_{ \partial\Omega} \xi d \lambda_{ \partial}=0$.
In particular, choosing $\xi \in C^\infty_0(\Omega)$ (which obviously satisfies $\int_{ \partial\Omega} \xi d \lambda_{ \partial}=0$), we get that $f$ should satisfy $-\Delta f=\sigma f$ in $\Omega$. Hence $ \int_{ \partial\Omega} \frac{\partial f}{\partial \nu} \xi d\lambda_{ \partial}=0$ for each $\xi$ with zero mean, so it follows that $\int_{ \partial\Omega} \frac{\partial f}{\partial \nu} \left(\xi-\int_{ \partial\Omega} \xi d \lambda_{ \partial}\right) d\lambda_{ \partial}=0$ for every $\xi \in C^1(\overline \Omega)$, which is equivalent to
\begin{align*}
\int_{ \partial\Omega} \left( \frac{\partial f}{\partial \nu}-\int_{ \partial\Omega} \frac{\partial f}{\partial \nu} d \lambda_{ \partial}\right)\xi d\lambda_{ \partial}=0
\end{align*}
for every $\xi \in C^1(\overline \Omega)$. It follows that $\frac{\partial f}{\partial \nu}$ is constant on ${ \partial\Omega}$. Therefore, $f$ satisfies
\begin{equation} 
\label{minimizer}
  \begin{cases}
    \Delta f= -\sigma f & \mbox{in}\ \ \Omega,\\
    \frac{\partial f}{\partial \nu} \equiv c & \mbox{on}\ \ \partial \Omega,\\
    \int_{ \partial\Omega} f d\lambda_{ \partial}=0,    
  \end{cases}
\end{equation}
for some constant $c$. 

Since $f$ satisfies~\eqref{minimizer}, 
\begin{align*}
\int_\Omega (\Delta f)^2 dx
= -\sigma \int_\Omega f\Delta f dx
&=\sigma \int_\Omega \|\nabla f\|^2 dx
-\sigma 
\int_{ \partial\Omega} \frac{\partial f}{\partial \nu} f dS \\
&=\sigma \int_\Omega \|\nabla f\|^2 dx
-\sigma 
c \int_{ \partial\Omega}  f dS =\sigma \int_\Omega \|\nabla f\|^2 dx,
\end{align*}
because $\int_{ \partial\Omega}  f dS=|{ \partial\Omega}| \int_{ \partial\Omega} f d\lambda_{ \partial}=0$.
Furthermore, note that $\|\nabla^2 f \|^2=\sum_{i,j} (\partial_{ij}^2 f)^2\geq \sum_{i=1}^d (\partial_{ii}^2 f)^2\geq \frac{1}{d}(\sum_{i=1}^d \partial_{ii}^2 f)^2=\frac{1}{d} (\Delta f)^2$. Therefore, the l.h.s. of Reilly's formula~\eqref{reilly} is bounded by
\begin{align*}
\int_\Omega \left((\Delta f)^2 - \|\nabla^2 f \|^2 \right) dx
\leq \frac{d-1}{d}\int_\Omega (\Delta f)^2  dx
\leq \frac{d-1}{d} \sigma \int_\Omega \|\nabla f\|^2 dx. 
\end{align*}
On the other hand, by the assumptions of Theorem~\ref{thm_main}, $H \geq 0$, $\mathrm{II} (\bnabla f, \bnabla f)\geq 0$ and 
\begin{align*}
\int_\Omega \Ric (\nabla f, \nabla f) dx 
\geq k_R \int_\Omega \|\nabla f\|^2 dx. 
\end{align*}
Since 
\begin{align*}
\int_{ \partial\Omega}  \Bdelta  f \frac{\partial f}{\partial \nu} dS=c\int_{ \partial\Omega}  \Bdelta  f  dS=0
\end{align*}
the r.h.s.\ of~\eqref{reilly} is bounded from below by $k_R \int_\Omega \|\nabla f\|^2 dx$. It turns out that
\begin{align*}
\frac{d-1}{d} \sigma \int_\Omega \|\nabla f\|^2 dx
\geq k_R \int_\Omega \|\nabla f\|^2 dx,
\end{align*}
which implies that $\sigma\geq \frac{d}{d-1} k_R$. It follows that inequality~\eqref{ineq K} holds with $K_1= \frac{d-1}{d k_R}$. 
\end{proof}

\begin{remark} Instead of using $K_1$ from Proposition~\ref{prop:K_one_manif} with $K_2=0$ another admissible choice  is \[K_1'  = \frac{|\Omega|}{|\partial \Omega|}B^2  (1+C_\Omega)<\infty,\] where $B$ is  the {optimal Sobolev trace constant} of $\Omega$, i.e.\ the norm of the embedding $H^{1,2}(\Omega)\hookrightarrow L^2 (\partial \Omega)$.
 $B^{-2}$ is the first nontrivial eigenvalue of a  Steklov-type eigenvalue problem 
 \begin{equation*}
\begin{cases}
  -\Delta f + f =0 &  \mbox{ in } \Omega \\
 \frac{\partial f}{\partial \nu}  = \sigma f  & \mbox{ on } \partial \Omega,
    \end{cases}\end{equation*}
for which however explicit lower bounds in terms of the geometry of $\Omega$ seem yet unknown \cite{MR1978428,MR1971310, MR3299025,MR1443055,MR2384747}. 
\end{remark}

\begin{proposition} \label{prop:Escobar_ineq}
Under the assumptions of Theorem~\ref{thm_main}, the inequality~\eqref{ineq Cint} holds with $\Cint=\frac{2}{k_2}$. 
\end{proposition}

\begin{proof} The optimal choice for $\Cint$ is $\sigma^{-1}$, where $\sigma$ is given by
\[
\sigma=\inf_{\substack {f \in C^1(\overline \Omega)\\ \int_{ \partial\Omega} f d \lambda_{ \partial}=0  }}  \frac{\int_\Omega \| \nabla f \|^2 d\lambda_\Omega}{ \int_{ \partial\Omega} f^2 d \lambda_{ \partial} }\]
is the first nontrivial eigenvalue of the Steklov-problem  c.f.\ \cite{MR3662010}
\begin{equation*}
\begin{cases}
 \Delta f = 0 &  \mbox{ in } \Omega, \\
 \frac{\partial f}{\partial \nu} ={ \frac{|\Omega|}{|\partial\Omega|}} \sigma f & \mbox{ on } \partial \Omega.
    \end{cases}\end{equation*}
Escobar \cite{escobar}  showed  $\sigma \geq \frac {k_2|\partial\Omega|} {2|\Omega|} $ in this case.
\end{proof}

Combining Proposition~\ref{prop:interp_method} with 
Propositions~\ref{prop:K_one_manif} and~\ref{prop:Escobar_ineq}, we get $\sigma_\alpha
\ge \frac{1}{M_{1,\alpha}}$, that completes the proof of Theorem~\ref{thm_main}.

\begin{remark}
When $\alpha$ goes to 0, $ M_{1,\alpha}$ tends to $\max\left\{C_\Omega+\frac{d-1}{dk_R},C_{\partial}\right\}$ and $M_{2,\alpha}$ tends to $+\infty$, so the estimation obtained via the interpolation method is always stronger. 
When $\alpha$ goes to $1$, $M_{1,\alpha}$ tends to $C_\Omega$ and $M_{2,\alpha}$ tends to $\frac{d-1}{dk_R}$, so the relative strength of each method depends on the values of $C_\Omega$, $d$ and $k_R$. 
\end{remark}

\section{Further Examples} \label{Sec:examples}
\subsection{Brownian motion on Euclidean balls with sticky  boundary diffusion}
\label{subsec:ball_full}

To show that the interpolation method can produce good results also in cases when $k_R=0$ we consider the example when   $\Omega:= B_1$ is the unit ball in $\R^d$, $\Sigma=\partial \Omega$ and  $\Deriv=\nabla$ and $\bDeriv=\sqrt \beta \, \bnabla$ with $\DomE_0=C^1(\overline \Omega)$.

\begin{proposition}
\label{prop:example_full_sphere}
In the case when $\Omega=B_1\subset \R^d$ the optimal  Poincaré constant of the generator~\eqref{eq:gendef} is bounded from above by 
\begin{align}
\label{example_full_sphere}
C_\alpha \leq
\max \left(
C_\Omega + (1-\alpha)\frac{d+1}{4d^2},
\frac{4(1-\alpha) d  +4\alpha d^2 C_\Omega +\alpha(1-\alpha)(d+1) }{4d(\alpha d + (1-\alpha) \beta (d-1)) }
\right),
\end{align}
where $\alpha=\frac{\gamma}{d+\gamma}$ and $C_\Omega$ is the optimal Poincaré constant for reflecting Brownian motion on $B_1\subset \R^d$.
\end{proposition}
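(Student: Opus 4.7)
The plan is to apply Proposition~\ref{prop:interp_method} with $\Omega=B_1$, $\Sigma=\partial B_1$, $\Deriv=\nabla$ and $\bDeriv=\sqrt\beta\,\bnabla$, by identifying each of the input constants $C_\Sigma$, $\Cint$, $K_1$, $K_2$ explicitly and then substituting them into~\eqref{inequality_interpolation}. The choice $\alpha=\gamma/(d+\gamma)$ is dictated by the balance condition $\frac{\alpha}{1-\alpha}\frac{|\partial\Omega|}{|\Omega|}=\gamma$ together with $|\partial B_1|=d|B_1|$.

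First I would record the extremal Poincaré constants: $C_\Omega$ is kept as an input, and $C_\Sigma=\frac{1}{\beta(d-1)}$ since the first nonzero eigenvalue of $-\Delta_{S^{d-1}}$ is $d-1$ and the tangential Dirichlet form carries a factor $\beta$. For the Steklov-type constant $\Cint$ in~\eqref{ineq Cint} I would invoke the fact that the first nontrivial Steklov eigenvalue of the unit ball equals $1$ (the coordinate functions $x_1,\ldots,x_d$ being eigenfunctions): this yields $\int_{\partial B_1}(f-\bar f_\Sigma)^2\,dS\leq\int_{B_1}\|\nabla f\|^2\,dx$, and normalising the measures using $|\partial B_1|=d|B_1|$ gives $\Cint=1/d$.

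For the cross-inequality~\eqref{ineq K} I would apply the divergence theorem to the vector field $fx$; since $\divv(fx)=\nabla f\cdot x+df$ and $x\cdot\nu=1$ on $\partial B_1$, this produces the key identity
\begin{equation*}
\int_\Sigma f\,d\lambda_\Sigma-\int_\Omega f\,d\lambda_\Omega=\frac{1}{d}\int_\Omega \nabla f\cdot x\,d\lambda_\Omega .
\end{equation*}
A Cauchy--Schwarz estimate on the right-hand side (combined with the moment $\int_\Omega|x|^2\,d\lambda_\Omega=d/(d+2)$) bounds it entirely by a multiple of $\int_\Omega\|\nabla f\|^2\,d\lambda_\Omega$, so one can take $K_2=0$ and $K_1$ of order $1/d^2$. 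With $K_2=0$, the middle entry $\alpha K_2$ in the maximum of~\eqref{inequality_interpolation} vanishes and is dominated; substituting $C_\Sigma=\frac{1}{\beta(d-1)}$ and $\Cint=1/d$ into the remaining two terms and clearing denominators by $4d\beta(d-1)$ then brings~\eqref{inequality_interpolation} into the form~\eqref{example_full_sphere}.

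The main technical point is to pin down the precise variant of the Cauchy--Schwarz step that yields the constant $K_1=(d+1)/(4d^2)$ stated in~\eqref{example_full_sphere}; a naive application of Cauchy--Schwarz already gives a $K_1$ of the correct order $1/d^2$, and the rest of the argument is algebraic simplification of~\eqref{inequality_interpolation}.
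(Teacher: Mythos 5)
Your proposal is correct and takes a genuinely different route from the paper's for two of the three nontrivial constants, and it is arguably cleaner.

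For $\Cint$, the paper derives $\Var_{\lambda_\Sigma}f \le \frac{1}{d}\int_\Omega\|\nabla f\|^2\,d\lambda_\Omega$ from Beckner's sharp Sobolev inequality on the sphere, passing through a logarithmic Sobolev inequality and then linearizing. You instead invoke the first nontrivial Steklov eigenvalue of $B_1$, which equals $1$ (eigenfunctions $x_1,\dots,x_d$), and normalize using $|\partial B_1| = d\,|B_1|$. Both routes land on $\Cint = 1/d$; yours is shorter and more transparent, and in fact this is precisely the Steklov viewpoint the authors themselves adopt in Proposition~\ref{prop:Escobar_ineq} for general manifolds, so it fits the paper's framework naturally.

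For $K_1$, the paper represents $\int_\Sigma f\,d\lambda_\Sigma$ by radially projecting $\lambda_\Omega$ to the sphere, applies Jensen, integrates along rays in polar coordinates, and optimizes $\int_0^s(1-r)r^{d-1}dr \le \frac{d+1}{4d^2}\,s^{d-1}$ to get $K_1 = \frac{d+1}{4d^2}$. Your divergence-theorem identity
\begin{equation*}
\int_\Sigma f\,d\lambda_\Sigma - \int_\Omega f\,d\lambda_\Omega \;=\; \frac{1}{d}\int_\Omega \nabla f\cdot x\,d\lambda_\Omega
\end{equation*}
(which follows from $\divv(fx) = \nabla f\cdot x + df$, $x\cdot\nu = 1$ on $\partial B_1$, and $|\partial B_1| = d|B_1|$) is exact, so no Jensen step is needed; a single Cauchy--Schwarz with $\int_\Omega|x|^2\,d\lambda_\Omega = \frac{d}{d+2}$ gives $K_1 = \frac{1}{d(d+2)}$. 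You need not worry about reproducing $\frac{d+1}{4d^2}$: since $\frac{1}{d(d+2)} < \frac{d+1}{4d^2}$ for all $d\ge 2$ (equivalently $0 < d^2 - d + 2$), and $K_1$ enters the interpolation bound~\eqref{inequality_interpolation} monotonically, your smaller $K_1$ yields a stronger upper bound which in particular implies~\eqref{example_full_sphere}. The remaining substitution of $C_\Sigma = \frac{1}{\beta(d-1)}$, $\Cint = \frac{1}{d}$, $K_2 = 0$ into~\eqref{inequality_interpolation} and clearing denominators is routine, exactly as you indicate. In short, your proof is valid and slightly improves the constant in the second term of~\eqref{example_full_sphere}.
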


\begin{proof}
In order to apply Proposition~\ref{prop:interp_method}, it is sufficient to compute the constants $C_\Sigma$,  $\Cint$, $K_1$ and $K_2$. We claim that inequalities~\eqref{ineq Cint} and~\eqref{ineq K} holds with
\[ C_\Sigma =\frac{1}{\beta(d-1)}, \quad \Cint=\frac{1}{d}, \quad K_1= \frac{d+1}{4d^2}, \quad K_2=0. \]

First, according to~\cite[Theorem 22.1]{Shubin}, the first eigenvalue of the Laplace-Beltrami operator on the unit sphere of dimension $d-1$ is equal to $d-1$, thus $C_\Sigma=\frac{1}{\beta (d-1)}$. 

Moreover, according to~\cite[Theorem~4]{Beckner:1993}, for every $f \in \mC^1(\partial \Omega)$ one has 
\begin{equation*}
  \left( \int_{ \partial \Omega  }   |f|^q d\lambda_{\Sigma }  \right)^{ \frac{2}{ q }}\leq \frac{ q-2 }{ d } \int_{ \Omega}   \|\nabla u\|^2 d\lambda_\Omega+\int_{ \partial \Omega  }   f^2 d\lambda_{\Sigma },  
\end{equation*}
for $2\leq q<\infty$ if $d=2$ and $2\leq q< \frac{ 2d-2 }{ d-2 }$ if $d\geq 3$, where $u$ is the harmonic extension of $f$ to  the unit ball $\Omega$. 
It implies the logarithmic Sobolev inequality $ \operatorname{Ent}_{\lambda_\Sigma} (f^2)\leq  \frac{2}{ d }\int_{ \Omega }   \|\nabla u\|^2 d\lambda_\Omega$.
Repeating the proof of Proposition~5.1.3 in~\cite{Bakry:2014}, we get $
\Var_{\lambda_{\Sigma }} f\leq  \frac{1}{ d }\int_{ \Omega }   \|\nabla u\|^2 d\lambda_\Omega$.
Moreover, since the harmonic extension of $f$ is minimizing the energy functional $\mathcal E_1$ under any function with boundary condition $f$, the last inequality implies for any $f \in \mC^1(\overline \Omega)$
\begin{align}
\label{equ_poincare_inequality_on_s}
\Var_{\lambda_{\Sigma }} f\leq  \frac{1}{ d }\int_{ \Omega }   \|\nabla f\|^2 d\lambda_\Omega,
\end{align}
which implies $\Cint=\frac{1}{d}$.

Furthermore, note that $\int_{\partial \Omega} f(y)  \lambda_\Sigma(dy)=\int_\Omega f(\pi_x)  \lambda_\Omega (dx)$, where $\pi_x= \frac{x}{ \|x\| }$, $x \not= 0$. Hence, using  Jensen's inequality and polar coordinates
\begin{align*}
 \left( \int_\Omega f d \lambda_\Omega - \int_{\partial \Omega} f d \lambda_\Sigma \right)^2&\leq  \int_{\Omega} (f(x)-f(\pi_x))^2  \lambda_\Omega (dx)\\
       &=  \frac{ 1 }{ |\Omega| }\int_{ \partial \Omega } \int_{ 0 }^{ 1 }  \left(f(ry)-f(y)\right)^2r^{d-1}drdy \\
    &=  \frac{ 1 }{|\Omega|} \int_{ \partial \Omega }   \int_{ 0 }^{ 1 }  \left( \int_{ r }^{ 1 } \frac{ d }{ ds }f(sy)ds\right)^2r^{d-1}drdy\\
  &\leq \frac{ 1 }{ |\Omega| } \int_{ \partial \Omega }    \int_{ 0 }^{ 1 } (1-r)\left( \int_{ r }^{ 1 } \left(\frac{d}{ds}f(sy)\right)^2ds  \right)r^{d-1} drdy\\
    &= \frac{1}{ |\Omega| }\int_{ \partial \Omega }   \int_{ 0 }^{ 1 } \left[ \int_{ 0 }^{ s } (1-r) r^{d-1}dr  \right] \left( \frac{d}{ ds }f(sy) \right)^2 dsdy.
  \end{align*}
  We separately estimate
  \begin{align*}
  \int_{ 0 }^{ s } (1-r) r^{d-1}dr =  \left( \frac{s}{d}-\frac{s^2}{d+1}  \right)s^{d-1}
  \leq \frac{d+1}{4d^2}s^{d-1}.
  \end{align*}
  for any $s \in [0,1]$.
 Hence,
  \begin{align}
  \left( \int_\Omega f d \lambda_\Omega - \int_{\partial \Omega} f d \lambda_\Sigma \right)^2&\leq \frac{ d+1 }{ 4d^2|\Omega| }\int_{ \partial \Omega }    \int_{ 0 }^{ 1 }\left( \nabla f(sy) \cdot y\right)^2s^{d-1} ds dy\notag\\
  &= \frac{ d+1 }{ 4d^2|\Omega| } \int_{ \partial \Omega }    \int_{ 0 }^{ 1 } \left\| \nabla f(sy)\right\|^2 s^{d-1} dsdy \notag\\
  &= \frac{ d+1 }{ 4d^2 }\int_{ \Omega }   \|\nabla f(x)\|^2 \lambda_\Omega(dx). 
  \label{ineq_interpolation_ball}
  \end{align}
  which implies  $K_1= \frac{d+1}{4d^2}$ and $K_2=0$. 
\end{proof}

For illustration, in $d=2, $ we compare the bound from Proposition~\ref{prop:example_full_sphere} for $\beta=1,\gamma>0$ to the  optimal constant $C_\alpha$  which will be computed numerically. To evaluate the bound \eqref{example_full_sphere}, note that  in this case   \begin{equation}
\label{def:gamma_star}
C_\Omega=\frac{1}{\sigma_{\Omega}}\approx\frac{1}{3.39}, 
\end{equation}
where $\sigma_{\Omega}$ is the smallest positive eigenvalue of the Laplace operator with Neumann boundary condition on the circle. It is given as the minimal positive solution to the equation $J_m'(\sqrt{\gamma})=0$, $m\in\N_0$, where $J_m$ is the Bessel function of the first kind of parameter $m$,  defined  by $J_m(x)=\frac{1}{\pi} \int_0^\pi\cos (mt-x \sin t) dt$, $x \geq 0$.  As a consequence, 
 inequality~\eqref{example_full_sphere} becomes  
\begin{align}
\label{example_full_sphere_dim2}
C_\alpha \leq
\frac{8 (1-\alpha)\sigma_{\Omega}+16\alpha+3\alpha(1-\alpha)\sigma_{\Omega}}
{8(1+\alpha)\sigma_{\Omega} }.
\end{align}

For the numerical computation of  $C_\alpha$ one notes that  the generator $A_\alpha$ associated with $\e_\alpha$ is defined on $ D(A_\alpha) \subset \mC^2(\overline{\Omega})$ as  
\[
  A_\alpha f =  \I_{\Omega}\Delta f+\I_{\partial \Omega}\left( \Bdelta f- \frac{ 2 \alpha }{ 1-\alpha }\frac{\partial f}{\partial \nu} \right),
\]
where $\Bdelta$  and $\frac{\partial}{\partial \nu}$ denote the Laplace-Beltrami operator  and the outer normal derivative on the circle $\partial \Omega$.
Hence, an eigenvector of $-A_\alpha$ for eigenvalue $\lambda \geq 0$ is a function $f\in D(A_\alpha)$ such that
\[
  A_\alpha f=-\lambda f \quad \mbox{in}\ \ \Omega.
\]
This equation is equivalent to the system of partial differential equations
\begin{equation*}
  \label{equ_equation_for_eigenvalues}
  \begin{cases}
    \Delta f= -\lambda f & \mbox{in}\ \ \Omega,\\
    \Bdelta f-\frac{2\alpha}{1-\alpha} \frac{\partial f}{\partial \nu}= -\lambda f & \mbox{on}\ \ \partial \Omega,
  \end{cases}
\end{equation*}
which by the continuity of $f$ can be rewritten  as
\begin{equation*}
  \label{equ_equation_for_eigenvalues_with_boundary_conditions}
  \begin{cases}
    \Delta f= -\lambda f & \mbox{in}\ \ \Omega,\\
    \Delta f=\Bdelta f-\frac{2\alpha}{1-\alpha} \frac{\partial f}{\partial \nu} & \mbox{on}\ \ \partial \Omega.
  \end{cases}
\end{equation*}
Passing to polar coordinates  $(x_1,x_2)=(r\cos\theta,r\sin\theta)\in\Omega$ in $d=2$ and separating variables, we obtain the set of eigenfunctions $\{f_{m,l}^c,f_{m,l}^s\}_{m,l \in \N_0}$,
\[
f_{m,l}^c(x_1,x_2)=J_m(\sqrt{\lambda_{m,l}}r)\cos(m\theta), \quad m,l \in\N_0,
\]
\[
f_{m,l}^s(x_1,x_2)=J_m(\sqrt{\lambda_{m,l}}r)\sin(m\theta), \quad m \in\N,\ \ l \in\N_0,
\]
where  $\lambda_{m,l}$, $l \in\N_0$, are countable family of positive solutions to the  equation
\begin{equation}
  \label{equ_equation_for_gamma}
  \sqrt{\lambda} J_m''(\sqrt{ \lambda })+\frac{ 1+ \alpha }{ 1-\alpha }J_m'(\sqrt{ \lambda })=0
\end{equation}
for every $m \in \N_0$. Since the family $\{f_{m,l}^c,\ m,l \in\N_0\}\cup\{f_{m,l}^s,\ m\in\N_0,\ l\in\N_0\}$ is dense in $L_2(\Omega,\lambda_\alpha)$  and the operator $A_\alpha$ is symmetric, the standard argument implies 
\begin{equation}
\label{optimal_constant}
C_\alpha=\frac{1}{\lambda_{\alpha,\star}},
\end{equation}
where $\lambda_{\alpha,\star}=\min\limits_{m,l\in\N_0}\lambda_{m,l}$. The resulting curves  are plotted in Figure~\ref{fig:full_sphere}.

\begin{figure}[h]\label{fig:graphs}
\centering
  \includegraphics[width=83mm]{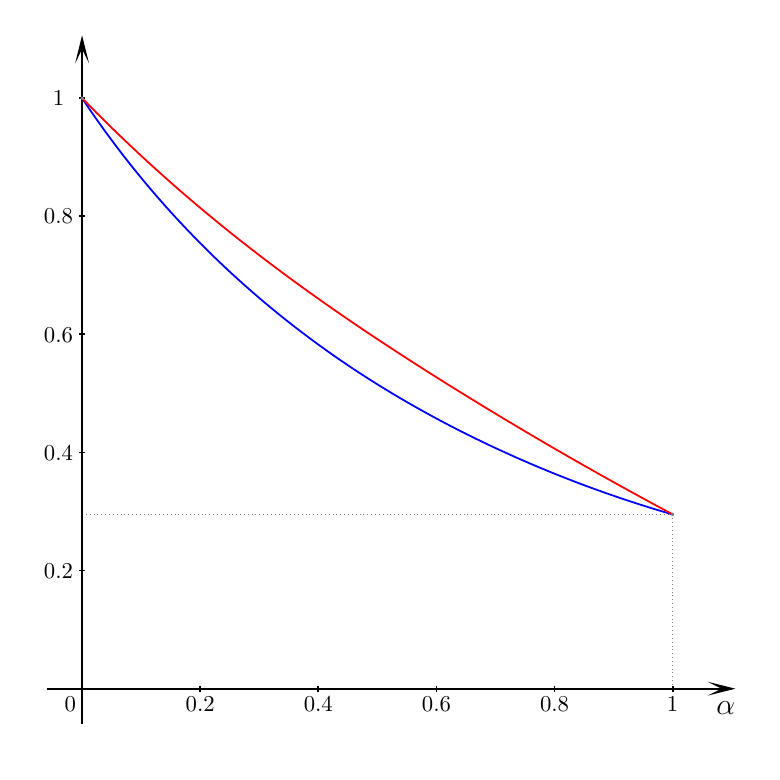}
  \caption{The blue curve represents $\alpha \mapsto C_\alpha$ the optimal Poincaré constant when $\Omega$ is the unit ball of $\R^2$ with full boundary diffusion. The red curve is the upper estimate given by~\eqref{example_full_sphere_dim2}.}
  \label{fig:full_sphere}
\end{figure}

\subsection{Brownian motion on balls with partial sticky reflecting boundary diffusion}
\label{subsec:partial_ball}

The case of sticky diffusion along only parts of the boundary and standard reflection elsewhere at the boundary can be treated in a similar way. As in Section~\ref{subsec:ball_full}, let  $\Omega:= B_1$ be the unit ball of $\R^2$  and $\beta=1$.  Now, define for a fixed 
 $\delta \in (0,1)$  
\[
\Sigma=\{ (\cos \theta, \sin \theta) \in \partial\Omega: -\delta \pi \leq \theta \leq \delta \pi\}, \quad \quad \neumann:=\partial \Omega \backslash \Sigma.
\]

\begin{proposition}
\label{prop_partial_sphere} It holds that 
\begin{align}
\label{estim_partial}
C_\alpha \leq
\max \left( 
C_\Omega + (1-\alpha)K_1(\delta), 
\frac{4(1-\alpha) \delta^2+8 \alpha \delta^3 C_\Omega +8 \alpha (1-\alpha) \delta^3 K_1(\delta)  }{(1-\alpha) + 8 \alpha \delta^3 }
\right),
\end{align}
where $C_\Omega = \frac{1}{\sigma_{\Omega}}\approx\frac{1}{3.39}$  and $K_1(\delta)= \left( \sqrt{1-\delta}\pi + \frac{1}{4}\sqrt{\frac{3}{\delta}} \right)^2$.
\end{proposition}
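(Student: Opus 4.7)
My strategy is to apply Proposition~\ref{prop:interp_method} with the specific choice
\[
C_\Sigma = 4\delta^2, \qquad \Cint = \frac{1}{2\delta}, \qquad K_2 = 0, \qquad K_1 = K_1(\delta),
\]
leaving $C_\Omega$ as the Neumann Poincar\'e constant of the unit disk. Once these four constants are in place, the rest should be a short algebraic manipulation: since $\Cint C_\Sigma = 2\delta$, multiplying numerator and denominator of the third slot of~\eqref{inequality_interpolation} by $2\delta$ produces exactly the quotient in~\eqref{estim_partial}, while the middle slot $\alpha K_2$ drops out entirely and the first slot matches verbatim.

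For the two boundary constants I would argue briefly. The arc $\Sigma$ has length $2\delta\pi$ with Neumann endpoints, so its first nonzero Laplace-Beltrami eigenvalue is $(\pi/(2\delta\pi))^2 = 1/(4\delta^2)$, giving $C_\Sigma = 4\delta^2$. For $\Cint$, I would reduce to the full-sphere case of Proposition~\ref{prop:example_full_sphere}: for every constant $c$,
\[
\int_\Sigma (f-c)^2 \, d\lambda_\Sigma = \frac{1}{2\delta\pi}\int_{-\delta\pi}^{\delta\pi}(f(1,\theta)-c)^2\, d\theta \leq \frac{1}{\delta}\int_{\partial \Omega}(f-c)^2 \, d\lambda_{\partial\Omega},
\]
so taking the infimum over $c$ gives $\Var_{\lambda_\Sigma}(f) \leq \delta^{-1}\Var_{\lambda_{\partial\Omega}}(f)$, which combined with~\eqref{equ_poincare_inequality_on_s} with $d=2$ yields $\Cint = 1/(2\delta)$.

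The main work is to verify~\eqref{ineq K} with $K_1=K_1(\delta)$ and $K_2=0$; this is where I expect the only real obstacle. I would introduce the cone $C_\delta:=\{(r\cos\theta,r\sin\theta) : r\in[0,1],\ |\theta|\leq\delta\pi\}$ above $\Sigma$, which has $\lambda_\Omega(C_\delta)=\delta$, and the average $\bar f_{C_\delta}:=\delta^{-1}\int_{C_\delta} f \, d\lambda_\Omega$, then decompose
\[
\int_\Omega f \, d\lambda_\Omega - \int_\Sigma f \, d\lambda_\Sigma = \bigl(\bar f_\Omega - \bar f_{C_\delta}\bigr) + \Bigl(\bar f_{C_\delta} - \int_\Sigma f \, d\lambda_\Sigma \Bigr),
\]
and bound each piece separately in $L^2(\lambda_\Omega)$ before reassembling via Minkowski's inequality. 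For the second (radial) piece I would rewrite $\int_\Sigma f \, d\lambda_\Sigma = \delta^{-1}\int_{C_\delta} f(\pi_x) \, d\lambda_\Omega(x)$ using polar coordinates and the identity $\pi_x = x/\|x\|$; Cauchy-Schwarz together with the radial estimate leading to~\eqref{ineq_interpolation_ball} with $d=2$ restricted to the cone then yields the prefactor $\frac14\sqrt{3/\delta}$. For the first (angular) piece I would use the angle-rescaling map $T(r,\theta)=(r,\delta\theta)$, which pushes $\lambda_\Omega$ to the normalized measure on $C_\delta$, so that $\bar f_\Omega - \bar f_{C_\delta}=\int_\Omega(f(r,\theta)-f(r,\delta\theta))\, d\lambda_\Omega$; applying the fundamental theorem of calculus in $\theta$ and Cauchy-Schwarz to get $(f(r,\theta)-f(r,\delta\theta))^2 \leq (1-\delta)|\theta|\int_{\delta\theta}^\theta(\partial_\phi f(r,\phi))^2\,d\phi$, then swapping the order of $\theta$ and $\phi$ integration and using $(\partial_\phi f)^2\leq r^2\|\nabla f\|^2$, produces the prefactor $\sqrt{1-\delta}\,\pi$. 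Combining via Minkowski gives $K_1(\delta)$ exactly. The cleanest constant in the angular step requires some care: the naive Fubini bound yields the tighter prefactor $(1-\delta)^2(1+\delta)\pi^2/2$, which one then relaxes to $(1-\delta)\pi^2$ via the elementary inequality $(1-\delta)(1+\delta)\leq 2$ to match the clean form of $K_1(\delta)$ in the statement.
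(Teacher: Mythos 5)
Your proposal is correct and follows essentially the same route as the paper: same choice of constants $C_\Sigma=4\delta^2$, $\Cint=1/(2\delta)$, $K_2=0$, and the same angular/radial splitting with a Minkowski-type inequality to produce $K_1(\delta)=\left(\sqrt{1-\delta}\,\pi+\tfrac14\sqrt{3/\delta}\right)^2$, followed by the same substitution into Proposition~\ref{prop:interp_method}. The only cosmetic differences are where Jensen is applied (the paper applies it once to $I$ and then decomposes the integrand pointwise; you decompose the scalar difference first and apply Jensen to each piece) and the fact that you note a slightly tighter angular Fubini bound before relaxing it to the paper's $(1-\delta)\pi^2$ — neither changes the argument or the final estimate.
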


As previously, we will start by computing the needed constants $C_\Omega$, $C_\Sigma$, $\Cint$, $K_1$ and $K_2$. 
The first constant, $C_\Omega=\frac{1}{\sigma_{\Omega}}\approx \frac{1}{3.39}$, remains unchanged.
\begin{lemma} The following inequalities hold true
\begin{align}
\label{lem_ine_1}
\Var_{\mus} f &\leq C_\Sigma \int_{\Sigma} \|\bnabla f\|^2 d \mus, \\
\Var_{\mus} f &\leq \Cint \int_{\Omega} \|\nabla f\|^2 d\lambda_\Omega,
\label{lem_ine_2}
\end{align}
where $C_\Sigma=4\delta^2$ and $\Cint=\frac{1}{2\delta}$. 
\end{lemma}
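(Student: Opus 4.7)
Both inequalities concern only the two-dimensional ball and an arc $\Sigma$ of the unit circle, so I would treat them separately by exploiting, respectively, the one-dimensional structure of $\Sigma$ and a comparison with the already-proved full-sphere case.

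For \eqref{lem_ine_1} the plan is to parametrise $\Sigma$ by arc length. Writing $h(s):=f(\cos s,\sin s)$ for $s\in[-\delta\pi,\delta\pi]$, the normalised Hausdorff measure $\mus$ pulls back to the uniform probability measure $\frac{1}{2\delta\pi}\,ds$ on the interval of length $2\delta\pi$, and the tangential gradient satisfies $\|\bnabla f(\cos s,\sin s)\|=|h'(s)|$. The classical one-dimensional Neumann Poincar\'e inequality on $[-L,L]$ reads $\int_{-L}^L(h-\bar h)^2\,ds\leq \frac{(2L)^2}{\pi^2}\int_{-L}^L (h')^2\,ds$; plugging $L=\delta\pi$ and dividing by $2\delta\pi$ yields exactly $\Var_{\mus} f\leq 4\delta^2\int_\Sigma\|\bnabla f\|^2\,d\mus$, so $C_\Sigma=4\delta^2$.

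For \eqref{lem_ine_2} I would reduce to the already-proved full-sphere inequality~\eqref{equ_poincare_inequality_on_s}, which in $d=2$ reads $\Var_{\lambda_{\partial\Omega}} f\leq \tfrac12\int_\Omega\|\nabla f\|^2\,d\lambda_\Omega$ (here $\lambda_{\partial\Omega}$ denotes the uniform probability measure on the whole unit circle). The key observation is the density ratio $d\mus/d\lambda_{\partial\Omega}=|\partial\Omega|/|\Sigma|=1/\delta$ on $\Sigma$. Using the variational characterisation $\Var_{\mus}f=\inf_{c\in\R}\int_\Sigma (f-c)^2\,d\mus$ with the specific choice $c=\int_{\partial\Omega}f\,d\lambda_{\partial\Omega}$, I obtain
\begin{align*}
\Var_{\mus} f
\leq \int_\Sigma (f-c)^2\,d\mus
= \frac{1}{\delta}\int_\Sigma (f-c)^2\,d\lambda_{\partial\Omega}
\leq \frac{1}{\delta}\Var_{\lambda_{\partial\Omega}} f
\leq \frac{1}{2\delta}\int_\Omega \|\nabla f\|^2\,d\lambda_\Omega,
\end{align*}
which is the desired estimate with $\Cint=\frac{1}{2\delta}$.

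Neither step presents a real obstacle: \eqref{lem_ine_1} is essentially a textbook one-dimensional Poincar\'e inequality, and the only mildly delicate point in \eqref{lem_ine_2} is to notice that the normalisation of $\mus$ costs a factor $1/\delta$ and that choosing the boundary mean (rather than the $\Sigma$-mean) as centering constant is what makes the comparison with $\Var_{\lambda_{\partial\Omega}} f$ work. Both bounds are sharp in order of $\delta$ as $\delta\to 0$, which is what one needs in order for the ensuing interpolation to capture the expected phase-transition behaviour at $\alpha=0$.
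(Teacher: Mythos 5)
Your proof is correct and follows essentially the same route as the paper: \eqref{lem_ine_1} is the one-dimensional Neumann Poincar\'e inequality on the arc of length $2\delta\pi$, and \eqref{lem_ine_2} reduces to the full-circle estimate~\eqref{equ_poincare_inequality_on_s} via the intermediate bound $\Var_{\mus} f \le \delta^{-1}\Var_{\lambda_\partial} f$. The paper derives that intermediate bound from the mixture decomposition $\Var_{\lambda_\partial} f = \delta\Var_{\mus} f + (1-\delta)\Var_{\mun} f + \delta(1-\delta)\bigl(\int_\Sigma f\,d\mus - \int_{\neumann} f\,d\mun\bigr)^2 \ge \delta\Var_{\mus} f$, whereas you use the variational characterisation of variance centred at the boundary mean, but the two steps are interchangeable.
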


\begin{proof}
Inequality~\eqref{lem_ine_1} corresponds to the Poincaré inequality of the Laplacian on the one-dimensional interval $[-\delta \pi, \delta \pi]$ with Neumann boundary conditions. It is well known (see~\cite[Proposition~4.5.5]{Bakry:2014}) that the optimal Poincaré constant is given by $C_\Sigma=4\delta^2$.

Moreover, let us decompose the normalized Hausdorff measure $\lambda_\partial$ on the sphere $\partial \Omega$ into the normalized Hausdorff measure $\mus$ on $\Sigma$  and the normalized Hausdorff measure $\mun$ on $\neumann$: $\lambda_\partial = \delta \mus + (1-\delta) \mun$. 
Therefore
\begin{align*}
\Var_{\lambda_{\partial }} f = \delta\Var_{\mus} f + (1-\delta)\Var_{\mun} f
  +  \delta(1-\delta) \left( \int_{\Sigma} f d\mus- \int_{\neumann} f d\mun \right)^2
  \geq \delta\Var_{\mus} f,
\end{align*}
Furthermore, recall that by inequality~\eqref{equ_poincare_inequality_on_s}, for any $f \in \mathcal C^1(\overline \Omega)$, $
\Var_{\lambda_{\partial }} f\leq  \frac{1}{ 2 }\int_{ \Omega }   \|\nabla f\|^2 d\lambda_\Omega$. 
It implies~\eqref{lem_ine_2}.
\end{proof}

\begin{lemma}
It holds that 
\begin{align*}
\left( \int_\Omega f d \lambda_\Omega - \int_{\Sigma} f d \mus \right)^2
\leq K_1(\delta) \int_\Omega \| \nabla f \|^2 d\lambda_\Omega
\end{align*}
with  $K_1(\delta)= \left( \sqrt{1-\delta}\pi + \frac{1}{4}\sqrt{\frac{3}{\delta}} \right)^2$.
\end{lemma}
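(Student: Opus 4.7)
The perfect-square form $K_1(\delta) = \bigl(\sqrt{1-\delta}\,\pi + \tfrac14\sqrt{3/\delta}\bigr)^2$ strongly suggests splitting the bulk--boundary discrepancy via a Minkowski-type decomposition. My plan is to introduce the circular wedge $C_\Sigma := \{r\omega : r\in[0,1],\,\omega\in\Sigma\}\subset\Omega$ subtended by $\Sigma$ at the origin (of aperture $2\delta\pi$ and measure $|C_\Sigma| = \delta|\Omega|$), its complement $C_N := \Omega\setminus C_\Sigma$, and the cone average $\bar f_{C_\Sigma} := |C_\Sigma|^{-1}\int_{C_\Sigma} f\,dx$. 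The split is
\begin{equation*}
\int_\Omega f\,d\lambda_\Omega - \int_\Sigma f\,d\mu_\Sigma = \underbrace{(\bar f_\Omega - \bar f_{C_\Sigma})}_{=:A_1} + \underbrace{(\bar f_{C_\Sigma} - \bar f_\Sigma)}_{=:A_2},
\end{equation*}
and the goal becomes to prove $|A_2|\le \tfrac14\sqrt{3/\delta}\,\|\nabla f\|_{L^2(\lambda_\Omega)}$ and $|A_1|\le \sqrt{1-\delta}\,\pi\,\|\nabla f\|_{L^2(\lambda_\Omega)}$ separately; Minkowski's inequality and squaring then yield the stated $K_1(\delta)$.

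The $A_2$ estimate is essentially the proof of Proposition~\ref{prop:example_full_sphere} transplanted from the whole ball to the wedge $C_\Sigma$: a polar change of variables gives $\bar f_\Sigma = |C_\Sigma|^{-1}\int_{C_\Sigma} f(\pi(x))\,dx$ with $\pi(x):=x/|x|$, so Jensen provides $A_2^2 \le |C_\Sigma|^{-1}\int_{C_\Sigma}(f(x)-f(\pi(x)))^2\,dx$; the radial Cauchy--Schwarz from that proof combined with $\int_0^s(1-r)r\,dr \le \tfrac{3}{16}s$ on $[0,1]$ yields $\int_{C_\Sigma}(f-f\circ\pi)^2\,dx \le \tfrac{3}{16}\int_{C_\Sigma}\|\nabla f\|^2\,dx$, and dividing by $|C_\Sigma|=\delta|\Omega|$ produces the desired bound. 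For $A_1$, the identity $\bar f_\Omega = \delta\bar f_{C_\Sigma}+(1-\delta)\bar f_{C_N}$ gives $A_1 = (1-\delta)(\bar f_{C_N}-\bar f_{C_\Sigma})$; passing to the area-weighted angular profile $G(\theta) := 2\int_0^1 f(r,\theta)\,r\,dr$, the difference $\bar f_{C_N}-\bar f_{C_\Sigma}$ equals $\int_0^{2\pi} G\,h\,d\theta$ where $h := |N|^{-1}\mathbf{1}_N - |\Sigma|^{-1}\mathbf{1}_\Sigma$ is a zero-mean signed weight on the boundary circle. Integrating by parts -- both $G$ and the primitive $P(\theta):=\int_0^\theta h$ are $2\pi$-periodic -- gives $\bar f_{C_N}-\bar f_{C_\Sigma} = -\int_0^{2\pi}P\,G'\,d\theta$, hence by Cauchy--Schwarz $(\bar f_{C_N}-\bar f_{C_\Sigma})^2 \le \|P\|_{L^2}^2\,\|G'\|_{L^2}^2$. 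An elementary integration of the piecewise-linear odd primitive $P$ yields $\|P\|_{L^2}^2 = \pi/6$ (independent of $\delta$), while the pointwise bound $(\partial_\theta f)^2 \le r^2\|\nabla f\|^2$ combined with a Cauchy--Schwarz in $r$ (pairing $\partial_\theta f/\sqrt r$ against $r^{3/2}$) furnishes $\int_0^{2\pi}(G')^2\,d\theta \le \int_\Omega\|\nabla f\|^2\,dx = \pi\int_\Omega \|\nabla f\|^2\,d\lambda_\Omega$.

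\emph{Main obstacle.} No serious obstacle remains. Assembling the pieces for $A_1$ gives $|A_1|\le (1-\delta)\pi/\sqrt 6\,\|\nabla f\|_{L^2(\lambda_\Omega)}$, which is majorised by the advertised $\sqrt{1-\delta}\,\pi\,\|\nabla f\|_{L^2(\lambda_\Omega)}$ via the elementary inequality $(1-\delta)/\sqrt 6 \le \sqrt{1-\delta}$ on $[0,1]$ -- so the constant in $K_1(\delta)$ is actually not tight, but its form is precisely what the Minkowski combination absorbs cleanly into a single perfect square. The only mildly delicate bookkeeping is the explicit evaluation $\|P\|_{L^2}^2=\pi/6$ and the Hardy-type Cauchy--Schwarz for the trace-to-bulk passage from $\|G'\|_{L^2}$ to $\|\nabla f\|_{L^2(\Omega)}$, both short calculations. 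With $|A_1|$ and $|A_2|$ controlled, $|A_1+A_2|\le |A_1|+|A_2|$ and squaring deliver the stated bound with constant $K_1(\delta)$.
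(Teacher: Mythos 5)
Your proof is correct and arrives at the same perfect-square structure, but by a genuinely different route. The paper also decomposes into an angular and a radial discrepancy, but does so at the level of \emph{points}: it writes $g(r,\theta)-g(1,\delta\theta)=[g(r,\theta)-g(r,\delta\theta)]+[g(r,\delta\theta)-g(1,\delta\theta)]$, bounds $I\le(\sqrt{J_1}+\sqrt{J_2})^2$, and controls the angular term $J_1$ by a pointwise Cauchy--Schwarz on $\int_{\delta\theta}^\theta\partial_u g\,du$ followed by the crude bound $|\theta|\le\pi$. You instead decompose at the level of \emph{averages}, inserting the cone average $\bar f_{C_\Sigma}$ as an intermediary; the angular piece $A_1$ then becomes an integral of the radially averaged profile $G(\theta)=2\int_0^1 f(r,\theta)\,r\,dr$ against a zero-mean weight $h$, which you kill off by integrating by parts against the explicit primitive $P$ (with the clean, $\delta$-independent value $\|P\|_{L^2}^2=\pi/6$) and a Hardy-type Cauchy--Schwarz to pass from $\|G'\|_{L^2}$ to $\|\nabla f\|_{L^2(\Omega)}$. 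The radial piece $A_2$ is essentially the paper's $J_2$ restricted to the wedge $C_\Sigma$, and the two agree constant for constant, $\tfrac{3}{16\delta}$. Your angular estimate is sharper: you get $|A_1|\le\tfrac{(1-\delta)\pi}{\sqrt6}\|\nabla f\|_{L^2(\lambda_\Omega)}$, which dominates the paper's $\sqrt{J_1}\le\sqrt{1-\delta}\,\pi\,\|\nabla f\|_{L^2(\lambda_\Omega)}$ via $(1-\delta)/\sqrt6\le\sqrt{1-\delta}$. What the paper's version buys is brevity and a purely local argument; what yours buys is a strictly better constant and a structural explanation (periodicity of $G$, zero mean of $h$, $\delta$-independence of $\|P\|_{L^2}$) of where the $\sqrt{1-\delta}\,\pi$ comes from.
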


\begin{proof}
For every $x \in \Omega\backslash \{0\}$ with polar coordinates $(r, \theta)$, $r \in (0,1)$, $\theta \in (-\pi,\pi]$, denote by $p_x$ the point of coordinates $(1, \delta \theta)$ on $\Sigma$. Obviously, $\int_{\Sigma} f(y)  \mus (dy)= \int_\Omega f(p_x)  \lambda_\Omega (dx)$ and by Jensen's inequality
\begin{align*}
I:=\left( \int_\Omega f d \lambda_\Omega - \int_{\Sigma} f d \mus \right)^2
\leq  \int_\Omega \left(f(x) -f(p_x)\right)^2  \lambda_\Omega (dx).
\end{align*}
Define $g(r, \theta):=f(r \cos(\theta), r \sin (\theta))$. Then
\begin{align}
\label{triangle}
I &\leq \frac{1}{\pi} \int_0^1 \int_{-\pi}^\pi (g(r, \theta)-g(1,\delta \theta))^2 r dr d\theta \leq  ( \sqrt{J_1} +  \sqrt{J_2})^2 ,
\end{align}
where $J_1=\frac{1}{\pi} \int_0^1 \int_{-\pi}^\pi (g(r, \theta)-g(r,\delta \theta))^2 r dr d\theta$ and $J_2=\frac{1}{\pi} \int_0^1 \int_{-\pi}^\pi (g(r, \delta\theta)-g(1,\delta \theta))^2 r dr d\theta$. 
On the one hand
\begin{align}
J_1 &= \frac{1}{\pi} \int_0^1 \int_{-\pi}^\pi \left(\int_{\delta \theta} ^\theta \frac{\partial g}{\partial \theta} (r,u) du\right)^2 r dr d\theta \leq \frac{1-\delta}{\pi} \int_0^1 \int_{-\pi}^\pi |\theta| \int_{-\pi}^\pi  \left(\frac{\partial g}{\partial \theta} \right)^2(r,u) du \; r dr d\theta \notag\\
&\leq (1-\delta)\pi^2  \frac{1}{\pi}\int_0^1  \int_{-\pi}^\pi  \left(\frac{1}{r}\frac{\partial g}{\partial \theta} \right)^2(r,u) du \; r dr 
\leq (1-\delta)\pi^2 \int_\Omega \| \nabla f \|^2 d\lambda_\Omega.
\label{ineq_J1}
\end{align}
On the other hand
\begin{align*}
J_2 
&\leq \frac{1}{\pi} \int_0^1 \int_{-\pi}^\pi (1-r)  \int_r ^1 \left(\frac{\partial g}{\partial r}\right)^2 (s,\delta \theta) ds \; r dr d\theta \leq \frac{1}{\pi} \int_0^1 \int_{-\pi}^\pi  \left(\frac{\partial g}{\partial r}\right)^2 (s,\delta \theta) \int_0^s (1-r) r dr ds d\theta.
\end{align*}
For every $s \in [0,1]$, 
$\int_0^s (1-r) r dr = \frac{s^2}{2}-\frac{s^3}{3}\leq \frac{3s}{16}$, thus
\begin{align}
J_2
&\leq \frac{3}{16\delta \pi} \int_0^1 \int_{-\delta \pi}^{\delta\pi}  \left(\frac{\partial g}{\partial r}\right)^2 (s,u) s ds du \leq \frac{3}{16\delta} \int_\Omega \| \nabla f \|^2 d\lambda_\Omega.
\label{ineq_J2}
\end{align}
The proof of the lemma is completed by putting together~\eqref{triangle}, \eqref{ineq_J1} and~\eqref{ineq_J2}.
\end{proof}

\begin{proof}[Proof of Proposition~\ref{prop_partial_sphere}]
We apply Proposition~\ref{prop:interp_method} with $C_\Omega = \frac{1}{\sigma_{\Omega}}$, $C_\Sigma=4\delta^2$, $\Cint=\frac{1}{2\delta}$, $K_1(\delta)= \left( \sqrt{1-\delta}\pi + \frac{1}{4}\sqrt{\frac{3}{\delta}} \right)^2$ and $K_2=0$. 
\end{proof}

\begin{figure}[ht]
\centering
\subfloat[$\delta=0.5$]{\includegraphics[width=70mm]{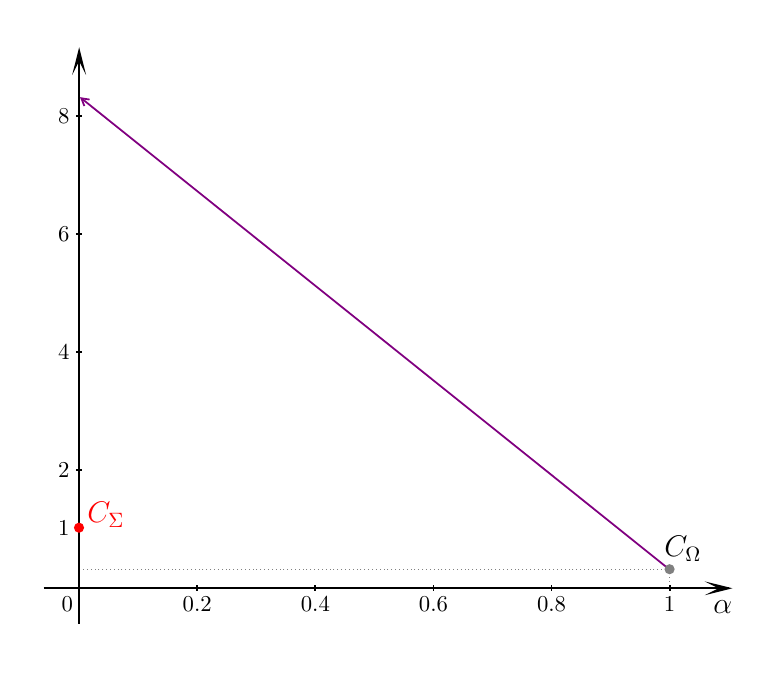}\label{delta1}}
\subfloat[$\delta=0.9$]{\includegraphics[width=70mm]{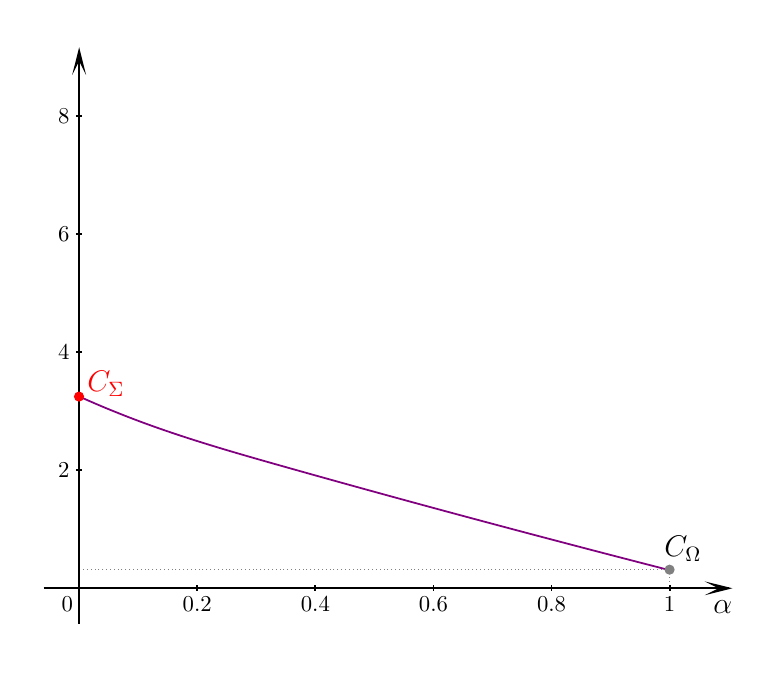}\label{delta2}}
\caption{The above two figures show the upper estimate given by the r.h.s of~\eqref{estim_partial}. 
In the case $\delta=0.9$ (Figure~\ref{delta2}), the curve interpolates between the extremal constants $C_\Sigma$ and $C_\Omega$, as opposed to the half-sphere case (Figure~\ref{delta1}).}
\label{fig:partial ball}
\end{figure}

For $\delta$ sufficiently large, the map $\alpha \mapsto C_\alpha$ is continuous at $\alpha=0$. Indeed, by Proposition~\ref{prop:continuity_condition}, a sufficient condition is $C_\Sigma(\delta) > C_\Omega + K_1(\delta)$, that is
\[4 \delta^2 > \frac{1}{\sigma_{\Omega}}+ \left( \sqrt{1-\delta}\pi + \frac{1}{4}\sqrt{\frac{3}{\delta}} \right)^2, \]
which is satisfied for any $\delta \geq 0.862$.

\subsection{Stratified spaces -- Ball with a needle}
\label{subsec:needle}

 Our final example presents an extension to stratified spaces, here in case of a  unit ball $\Omega=B_1$  of $\R^2$ with a needle $\mathcal L$   of length $L$  attached to one point of the boundary, i.e.\  $\mathcal L:=\{ (x,0): 1\leq x \leq L+1\}$, see Figure~\ref{fig:needle}.
The attachment point and the endpoint of the needle are denoted by $x_0:=(1,0)$ and $x_L=(L+1,0)$, respectively.

\begin{figure}[ht]
\centering
  \includegraphics[width=70mm]{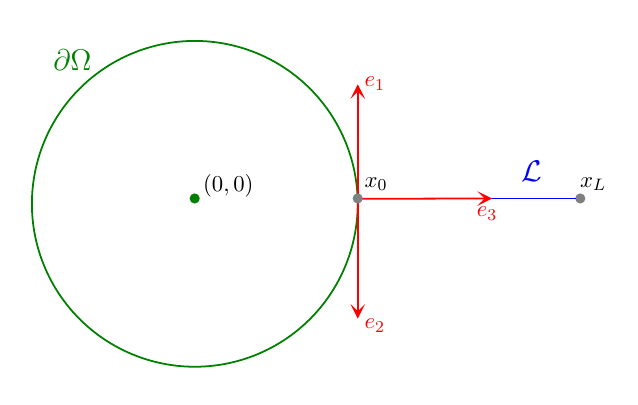}
  \caption{The ball (in green) is denoted by $\Omega$, the boundary of the ball is denoted by $\partial \Omega$ and the needle (in blue) is denoted by $\mathcal L$.}
  \label{fig:needle}
\end{figure}

In that setting, we define $\overline \Omega = \overline{ B_1 }\cup \mathcal L $, $\Sigma = \partial B_1 \cup \mathcal L$ and 
\[
\lambda_\alpha = \alpha \lambda_\Omega + (1-\alpha) \lambda_\Sigma,\]
where $\lambda_\Omega$ is as previously the normalized Lebesgue measure on $\Omega$ and $\lambda_\Sigma=\frac{2 \pi}{2 \pi+L} \lambda_\partial + \frac{L}{2\pi +L} \lambda_\mathcal L$, with $\lambda_\partial$ and $\lambda_\mathcal L$ being the normalized Hausdorff measures on $\partial \Omega$ and $\mathcal L$, respectively. We choose \[
 \DomE_0 =\left\{ f \in C_0(\overline{\Omega})) \cap C^1(\overline \Omega \setminus \{x_0\} ): \,   \frac{\partial f}{\partial e_1}+\frac{\partial f}{\partial e_2}+\frac{\partial f}{\partial e_3}= 0 \mbox{ at } x_0\right\},\]
 where $e_1=(0,1)$, $e_2=(0,-1)$ and $e_3=(1,0)$ are the three "tangent" vectors to $\Sigma$ at point $x_0$,
 and  $\Deriv := \nabla$, $\bDeriv:=\sqrt{\beta}\nabla^\tau$, which is well defined  in $\Sigma \setminus \{x_0\}$. With this choice, for $\alpha \in [0,1]$ $(\mathcal E_\alpha, \DomE_0)$ is a pre-Dirichlet form on $L^2(\overline \Omega, \lambda_\alpha) $, whose closure generates Brownian motion on $\Omega$ with sticky boundary diffusion on $\Sigma$, i.e.\ whose generator is given by  \[A_\alpha(f) =  \Delta f \one_{\Omega} + \beta \Delta_\Sigma f\one_{\Sigma} -\frac{\alpha}{1-\alpha}\frac{2\pi+L}{\pi} \frac {\partial f}{\partial \nu }\one_{\partial \Omega },\]
with $\Delta_\Sigma$ being the generator of the canonical diffusion on $\Sigma$ with reflecting boundary condition at $x_L$. As before, the optimal Poincaré constant $C_\alpha$  for $A_\alpha$ is given by
\begin{align*}
C_\alpha:=\sup_{\substack {f \in \DomE_0 \\ \mathcal E_\alpha(f)>0}}  \frac{\Var_{\lambda_\alpha} f}{\mathcal E_\alpha (f)},
\end{align*}
and  let $C_\Omega:=C_1$ and $C_\Sigma:=C_0$.  In this case the following estimate is obtained.
\begin{proposition}
\label{prop:ball_needle} Under the above assumptions,
\begin{align*}
    C_\alpha \leq \max \left( \frac{1}{\sigma_{\Omega}} + \frac{3}{8}(1-\alpha), \frac{1}{\beta\gamma_L} + \alpha \frac{L^2(\pi+L)}{\beta(2\pi + L)} \right),
\end{align*}
where $\gamma_L >0$ is the smallest positive solution to 
  \begin{align}
  \label{first eigenvalue needle}
 2 \cos (\sqrt{\gamma}L) (1- \cos (\sqrt{\gamma}2 \pi)) +\sin (\sqrt{\gamma}L)\sin (\sqrt{\gamma}2 \pi)=0.
 \end{align}
Note that $\gamma_L \leq 1$ for any $L>0$ and if  $L=2\pi$,  $\gamma_{2\pi}= \left(\frac{ \arccos(-1/3)}{2\pi}\right)^2 \approx 0.0925$. 
\end{proposition}

Let us compute the constants needed to apply Proposition~\ref{prop:interp_method}. 
As we do not expect an inequality of type~\eqref{ineq Cint} to hold in that case, we set $\Cint:=+\infty$. Moreover, $C_\Sigma$ can be computed exactly as follows.

\begin{lemma}
Under the assumptions above, the equality $C_\Sigma=\frac{1}{\beta \gamma_L}$ holds.
\end{lemma}

\begin{proof}
The constant $\frac{1}{C_\Sigma}$ is the smallest non-zero eigenvalue $\gamma$ of the  following problem:
\begin{align*}
  \begin{cases}
   \beta \Bdelta f = -\gamma f & \mbox{on}\ \ \Sigma \backslash\{x_0\},\\
    \frac{\partial f}{\partial \nu} = 0 & \mbox{at point }x_L,\\
    \frac{\partial f}{\partial e_1}+\frac{\partial f}{\partial e_2}+\frac{\partial f}{\partial e_3}= 0 & \mbox{at point }x_0,
  \end{cases}
\end{align*}
where $\Bdelta$ is the Laplace-Beltrami operator on $\partial \Omega $ and $\mathcal L$.
A general solution to that boundary value problem is given by
\begin{align*}
f(x)=\begin{cases}
 A \cos (\sqrt{\frac{\gamma}{\beta}}y)+B \sin (\sqrt{\frac{\gamma}{\beta}}y)  & \mbox{if } x=(y,0) \in \mathcal L, \\
C \cos (\sqrt{\frac{\gamma}{\beta}}\theta) + D \sin(\sqrt{\frac{\gamma}{\beta}}\theta)  & \mbox{if } x=(\cos \theta, \sin \theta) \in \partial \Omega,
\end{cases}
\end{align*}
where $A$, $B$, $C$ and $D$ have to satisfy the continuity assumption of $f$ at point $x_0$ and  both boundary conditions, that is:
\begin{align*}
\left\{ \begin{aligned}
A &= C = C \cos (\sqrt{\frac{\gamma}{\beta}} 2 \pi) + D \sin(\sqrt{\frac{\gamma}{\beta}}2 \pi) ,\\
0 &= -A \sin  (\sqrt{\frac{\gamma}{\beta}}L) + B \cos (\sqrt{\frac{\gamma}{\beta}}L), \\
0 &= B + D + C \sin (\sqrt{\frac{\gamma}{\beta}}2 \pi) - D \cos (\sqrt{\frac{\gamma}{\beta}}2 \pi).
\end{aligned}
\right.
\end{align*}
A short computation shows that this system has a non-trivial solution if and only if $\frac{\gamma}{\beta}$ solves~\eqref{first eigenvalue needle}. Therefore, $\frac{1}{C_\Sigma}=\beta \gamma_L$. Obviously, $\gamma=1$ is a solution to~\eqref{first eigenvalue needle}, thus $\gamma_L \leq 1$. 
 \end{proof}

Next, we look for the constants $K_1$ and $K_2$.

\begin{lemma}
Inequality~\eqref{ineq K} holds with $K_1=\frac{3}{8}$ and $K_2=\frac{L^2(\pi+L)}{\beta(2\pi + L)}$. 
\end{lemma}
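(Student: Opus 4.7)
The plan is to decompose the discrepancy $\int_\Omega f\, d\lambda_\Omega - \int_\Sigma f\, d\lambda_\Sigma$ by inserting the intermediate averages $\int_{\partial \Omega} f\, d\lambda_\partial$ (over the circle) and the value $f(x_0)$ at the attachment point, and then to control each piece by combining the ball estimate from Proposition~\ref{prop:example_full_sphere} with elementary one-dimensional Poincar\'e-type bounds on $\partial \Omega$ and on $\mathcal L$.

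Set $c := L/(2\pi+L)$. Since $\lambda_\Sigma = (1-c)\lambda_\partial + c\lambda_{\mathcal L}$, one has the algebraic identity
\[ \int_\Omega f\, d\lambda_\Omega - \int_\Sigma f\, d\lambda_\Sigma = A + c\,(v - u), \]
with $A := \int_\Omega f\, d\lambda_\Omega - \int_{\partial\Omega} f\, d\lambda_\partial$, $u := f(x_0) - \int_{\partial\Omega} f\, d\lambda_\partial$ and $v := f(x_0) - \int_{\mathcal L} f\, d\lambda_{\mathcal L}$. Applying $(a+b)^2 \leq 2a^2 + 2b^2$ splits the LHS into $2A^2 + 2c^2(v-u)^2$. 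Inequality~\eqref{ineq_interpolation_ball} from the proof of Proposition~\ref{prop:example_full_sphere} in dimension $d=2$ yields $A^2 \leq \tfrac{3}{16}\int_\Omega \|\nabla f\|^2 d\lambda_\Omega$, so $2A^2$ already produces the target constant $K_1 = \tfrac{3}{8}$.

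For the remaining term I would bound $u^2$ and $v^2$ by two independent one-dimensional Poincar\'e-type estimates. Parametrising $\partial \Omega$ by the angle $\theta \in (-\pi,\pi]$ measured from $x_0$ and writing $f(x)-f(x_0)$ as an integral of $g'(\phi):=\partial_\phi f(\cos\phi,\sin\phi)$ against $d\phi$, Cauchy--Schwarz and Fubini yield $u^2 \leq \tfrac{\pi^2}{2} \int_{\partial \Omega} \|\nabla^\tau f\|^2 d\lambda_\partial$. Parametrising $\mathcal L$ by arc length $s\in[1,1+L]$ and rewriting $v = -\tfrac{1}{L}\int_1^{1+L} \partial_s f(s,0)(1+L-s)\,ds$ by Fubini, Cauchy--Schwarz gives $v^2 \leq \tfrac{L^2}{3} \int_{\mathcal L} \|\nabla^\tau f\|^2 d\lambda_{\mathcal L}$. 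The weighted Young inequality $(v-u)^2 \leq \tfrac{u^2}{1-\mu} + \tfrac{v^2}{\mu}$ with the balancing choice $\mu := L/(L+\pi)$ then produces
\[ (v-u)^2 \leq \tfrac{\pi(L+\pi)}{2}\int_{\partial \Omega} \|\nabla^\tau f\|^2 d\lambda_\partial + \tfrac{L(L+\pi)}{3}\int_{\mathcal L} \|\nabla^\tau f\|^2 d\lambda_{\mathcal L}. \]

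To conclude, multiplying by $2c^2 = 2L^2/(2\pi+L)^2$ and comparing with the weights in $d\lambda_\Sigma = \tfrac{2\pi}{2\pi+L}d\lambda_\partial + \tfrac{L}{2\pi+L}d\lambda_{\mathcal L}$, the two resulting coefficients turn out to be smaller than those of
\[ K_2 \int_\Sigma \|\bDeriv f\|^2 d\lambda_\Sigma = \tfrac{L^2(\pi+L)}{2\pi+L}\int_\Sigma \|\nabla^\tau f\|^2 d\lambda_\Sigma \]
(by factors $\tfrac{1}{2}$ on the circle part and $\tfrac{2}{3}$ on the needle part, which leaves comfortable slack). The main subtlety is precisely the choice of Young weight $\mu = L/(L+\pi)$: this is what makes the factor $L+\pi$ appear symmetrically in both the circle and needle contributions and permits their recombination into a single clean multiple of $\int_\Sigma \|\bDeriv f\|^2 d\lambda_\Sigma$, which yields the announced expression $K_2 = L^2(\pi+L)/(\beta(2\pi+L))$.
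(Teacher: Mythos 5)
Your proof is correct, and the route is genuinely different from the paper's on the boundary term. After the common first step (inserting $\int_{\partial\Omega}f\,d\lambda_\partial$ and applying $(a+b)^2\le 2a^2+2b^2$ to extract $K_1=3/8$ from the two-dimensional case of~\eqref{ineq_interpolation_ball}), the paper handles the remaining discrepancy by constructing a coupling $x\mapsto p_x$ pushing $\lambda_\partial$ onto $\lambda_{\mathcal L}$ (angle $\theta$ mapped through $x_0$ to arc-length position $1+L-|\theta|L/\pi$ on the needle), then Jensen plus a one-dimensional estimate along each composite path $[-\pi,0]\cup[0,L]$ gives the constant $\frac{\pi+L}{2}$. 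You instead pivot on the value $f(x_0)$: bounding $u=f(x_0)-\int_{\partial\Omega}f\,d\lambda_\partial$ and $v=f(x_0)-\int_{\mathcal L}f\,d\lambda_{\mathcal L}$ separately by one-dimensional Poincar\'e-from-a-point estimates ($u^2\le\tfrac{\pi^2}{2}\int_{\partial\Omega}\|\nabla^\tau f\|^2d\lambda_\partial$, $v^2\le\tfrac{L^2}{3}\int_{\mathcal L}\|\nabla^\tau f\|^2d\lambda_{\mathcal L}$, both of which I verified), then recombining by a weighted Young inequality with the balancing weight $\mu=L/(L+\pi)$. That choice of $\mu$ is what reproduces the factor $\pi+L$ that the coupling argument gets from the length of the joined interval, so the two mechanisms are morally equivalent. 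Both proofs then pay a loss when repackaging the $\lambda_\partial$- and $\lambda_{\mathcal L}$-weighted terms into a single multiple of $\int_\Sigma\|\bDeriv f\|^2d\lambda_\Sigma$; your bound is in fact a bit sharper (factors $1/2$ and $2/3$ of slack, versus $1/2$ and $1$ in the paper's last step) but lands on the same declared constant $K_2=\frac{L^2(\pi+L)}{\beta(2\pi+L)}$. Your version is more elementary — no coupling to set up — while the paper's makes the geometric picture (paths through the junction $x_0$) more visible.
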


\begin{proof}
Recall that $\Sigma=\partial \Omega \cup \mathcal L$. 
Let us insert the average of $f$ over $\partial \Omega$ as follows:
\begin{align*}
\left( \int_\Omega f d \lambda_\Omega - \int_\Sigma f d \lambda_\Sigma \right)^2
&\leq
2 \left( \int_\Omega f d \lambda_\Omega - \int_{\partial \Omega} f d \lambda_\partial \right)^2
+ 2 \left( \int_{\partial \Omega} f d \lambda_\partial - \int_\Sigma f d \lambda_\Sigma \right)^2 \\
&\leq \frac{3}{8} \int_\Omega \| \nabla f \|^2 d\lambda_\Omega
+ 2 \left( \int_{\partial \Omega} f d \lambda_\partial - \int_\Sigma f d \lambda_\Sigma \right)^2,
\end{align*}
where the second inequality follows directly from~\eqref{ineq_interpolation_ball}. 
Moreover, recalling that $\lambda_\Sigma=\frac{2 \pi}{2 \pi+L} \lambda_\partial + \frac{L}{2\pi +L} \lambda_\mathcal L$
\begin{align*}
\left( \int_{\partial \Omega} f d \lambda_\partial - \int_\Sigma f d \lambda_\Sigma \right)^2
 &=  \frac{L^2}{(2\pi +L)^2} \left( \int_{\partial \Omega} f d \lambda_\partial - \int_{\mathcal L} f d \lambda_\mathcal L \right)^2 .
\end{align*}
For every $x=(\cos\theta, \sin \theta) \in \partial \Omega$, with $\theta \in (-\pi,\pi]$, we denote by $p_x$ the point of $\mathcal L$ with coordinates $(1+L -\frac{|\theta| L}{\pi},0)$. It follows that
\begin{align*}
\left( \int_{\partial \Omega} f d \lambda_\partial - \int_{\mathcal L} f d \lambda_\mathcal L \right)^2 
= \left( \int_{\partial \Omega} (f(x)-f(p_x)) d \lambda_\partial  \right)^2 
\leq \int_{\partial \Omega} (f(x)-f(p_x))^2 d \lambda_\partial . 
\end{align*}
Denoting by $\lambda_\partial^+$ and $\lambda_\partial^-$  the normalized Hausdorff measures on $\partial \Omega^+:=  \{(x,y) \in \partial \Omega:  y>0\}$ and $\partial \Omega^-:=  \{(x,y) \in \partial \Omega:  y<0\}$, respectively, 
\begin{align*}
 \int_{\partial \Omega} (f(x)-f(p_x))^2 d \lambda_\partial 
 = \frac{1}{2}   \int_{\partial \Omega^+} (f(x)-f(p_x))^2 d \lambda^+_\partial 
 +\frac{1}{2} \int_{\partial \Omega^-} (f(x)-f(p_x))^2 d \lambda^-_\partial .
\end{align*}
Moreover, for  
any $\mC^1$-function $g:[-\pi, L] \to \R$, 
\begin{align*}
\frac{1}{\pi} \int_0^\pi \left|g(-\theta)-g(L-\textstyle \frac{\theta L}{\pi})\right|^2 d\theta 
\leq \frac{\pi+L}{2} \int_{-\pi}^L |g'(t)|^2  dt ,
\end{align*}
so we deduce, identifying $\partial \Omega^+$ with $[-\pi,0]$ and $\mathcal L$ with $[0,L]$, that
\begin{align*}
\int_{\partial \Omega^+} (f(x)-f(p_x))^2 d \lambda^+_\partial 
\leq \frac{\pi+L}{2} \left( \pi \int_ {\partial \Omega^+} \|\bnabla f\|^2 d \lambda^+_\partial  + L \int_\mathcal L  \|\bnabla f\|^2 d \lambda_\mathcal L \right)
\end{align*}
and using symmetry to deal with $\partial \Omega^-$, we obtain
\begin{align*}
 \int_{\partial \Omega} (f(x)-f(p_x))^2 d \lambda_\partial 
 &\leq \frac{\pi+L}{4}\left( \pi \int_ {\partial \Omega^+} \|\bnabla f\|^2 d \lambda^+_\partial +\pi \int_ {\partial \Omega^-} \|\bnabla f\|^2 d \lambda^-_\partial  + 2L \int_\mathcal L  \|\bnabla f\|^2 d \lambda_\mathcal L \right) \\
  &\leq \frac{(\pi+L)(2\pi+L)	}{2} \int_\Sigma \|\bnabla f\|^2 d \lambda_\Sigma.
\end{align*}
Putting together the above inequalities, we get
\begin{align*}
\left( \int_\Omega f d \lambda_\Omega - \int_\Sigma f d \lambda_\Sigma \right)^2
&\leq \frac{3}{8} \int_\Omega \| \nabla f \|^2 d\lambda_\Omega
+ 2\frac{L^2}{(2\pi +L)^2} \frac{(\pi+L)(2\pi+L)	}{2\beta} \int_\Sigma \beta\|\bnabla f\|^2 d \lambda_\Sigma 
\end{align*}
which leads to inequality~\eqref{ineq K} with $K_1=\frac{3}{8}$ and $K_2=\frac{L^2(\pi+L)}{\beta(2\pi + L)}$. 
\end{proof}

\begin{proof}[Proof of Proposition~\ref{prop:ball_needle}]
Since $\Cint=\infty$, we immediately get from Proposition~\ref{prop:interp_method} that 
\begin{align*}
C_\alpha \leq
\max \left(
C_\Omega + (1-\alpha)K_1 ,
\alpha K_2 ,
C_\Sigma + \alpha K_2 \right)
= \max \left(
C_\Omega + (1-\alpha)K_1 ,
C_\Sigma + \alpha K_2 \right).
\end{align*}
Therefore,  
\begin{align}
\label{needle_with_beta}
    C_\alpha \leq \max \left( \frac{1}{\sigma_{\Omega}} + \frac{3}{8}(1-\alpha), \frac{1}{\beta\gamma_L} + \alpha \frac{L^2(\pi+L)}{\beta(2\pi + L)} \right),
\end{align}
where $\sigma_{\Omega} \approx 3.39$. 
\end{proof}

\begin{remark}
(i)  If $\beta$ is large enough, that is if the diffusion velocity  is larger on $\Sigma$ than on $\Omega$, then the first term in~\eqref{needle_with_beta} dominates. Precisely, if $\beta \geq \sigma_\Omega \left( \frac{1}{\gamma_L} +  \frac{L^2(\pi+L)}{2\pi + L} \right)$, then \eqref{needle_with_beta} rewrites for any $\alpha$
  \[C_\alpha \leq \frac{1}{\sigma_{\Omega}} + \frac{3}{8}(1-\alpha).
  \]
  Conversely, if $\beta \leq \frac{1}{\gamma_L} \left(\frac{1}{\sigma_{\Omega}} + \frac{3}{8}\right)^{-1}$, then  \eqref{needle_with_beta} rewrites for any $\alpha$
  \[C_\alpha \leq \frac{1}{\beta\gamma_L} + \alpha \frac{L^2(\pi+L)}{\beta(2\pi + L)}.
  \]
(ii) Combining with    \cite{bormann2025cheegertypeinequalitydriftlaplacian} one might expect that weighted stratified spaces as in \cite{Ullrich:2025} can be treated similarly.
\end{remark}

\section{Continuity of \texorpdfstring{$C_\alpha$}{C alpha}}
\label{subsection:cont_section} 
In general, the  function $\alpha\mapsto C_\alpha$ 
might have discontinuities at  $\alpha\in \{0,1\}$ in which cases an upper bound for $C_\alpha$ which interpolates continuously between $C_0$ and $C_1$ cannot exist due to the lower semicontinuity of $\alpha\mapsto C_\alpha$. For example, when $\Omega=(0,b) \times (0,1)\subset \R^2$ and $\Sigma=[0,b]\times\{0\}$, straightforward  computations yield \[
  \lim_{ \alpha \to  0 }C_{\alpha}= \max\left\{ C_\Sigma, \frac{4}{ \pi^2 } \right\},
\] 
where $C_\Sigma= \frac{b^2}{\pi^2}$. Hence $\alpha \mapsto C_{\alpha}$  is discontinuous at $\alpha=0$   if and only if $b< 2$.  
-- To generalize this to the framework of Section~\ref{subsec:energy_decomposition}  let 
   $\mC^1_0(\overline{\Omega})=\{f \in \mC^1(\overline{\Omega}) : f=0 \mbox{ on }\Sigma\}$  
and 
  \[
    \tilde{C}_0:=\sup_{\substack {f \in \mC^1_0(\overline \Omega) \\ f \text{ non constant}}}  \frac{\int_\Omega f^2d\lambda_\Omega}{\int_\Omega \| \Deriv f\|^2d\lambda_\Omega}.
  \]
  (If $\Deriv = \nabla$, $\tilde C_0$ is the inverse of the spectral gap for Brownian motion on $\Omega$ with killing on $\Sigma$ and normal reflection at $\partial \Omega \setminus \Sigma$.)
We can then record the following statement as a partial corollary to Proposition~\ref{prop:interp_method}.

\begin{proposition} 
\label{prop:continuity_condition}
In the setting of proposition \ref{prop:interp_method} it holds that  \[
    \varliminf_{ \alpha \to 0 }C_{\alpha}\geq \tilde{C}_0.
  \]
In particular,  if $C_\Sigma < \tilde C_0$, then $\alpha \mapsto C_\alpha$ is discontinuous at $\alpha=0$. Conversely, if $C_\Sigma \geq C_\Omega+K_1$ then $\alpha \mapsto C_\alpha$ is continuous at 0. If $C_{\Omega} \geq K_2$ continuity at 1 holds.
\end{proposition}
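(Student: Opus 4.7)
My plan is to dispatch the four assertions by combining direct variational test-function bounds with asymptotic analysis of~\eqref{inequality_interpolation} in Proposition~\ref{prop:interp_method}. The core work is the $\liminf$ inequality at~$0$; the discontinuity criterion then falls out for free, and both continuity statements reduce to matching a $\limsup$ coming from~\eqref{inequality_interpolation} against a $\liminf$ coming from the variational definition.

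For $\varliminf_{\alpha\to 0}C_\alpha \geq \tilde C_0$, I fix a non-constant $f \in \mC^1_0(\overline\Omega)$. Since $f\equiv 0$ on $\Sigma$, the integrals $\int_\Sigma f\,d\lambda_\Sigma$ and $\int_\Sigma f^2\,d\lambda_\Sigma$ vanish and $\bDeriv f|_\Sigma \equiv 0$, collapsing variance and energy to
\[
\Var_{\lambda_\alpha}(f) = \alpha\int_\Omega f^2\,d\lambda_\Omega - \alpha^2\Bigl(\int_\Omega f\,d\lambda_\Omega\Bigr)^2, \qquad \mathcal E_\alpha(f) = \alpha \int_\Omega \|\Deriv f\|^2\,d\lambda_\Omega.
\]
The ratio $\Var_{\lambda_\alpha}(f)/\mathcal E_\alpha(f)$ therefore tends to $\int_\Omega f^2\,d\lambda_\Omega / \int_\Omega \|\Deriv f\|^2\,d\lambda_\Omega$ as $\alpha\to 0$; since $C_\alpha$ dominates the ratio, supremizing over admissible $f$ yields the bound. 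Discontinuity is then immediate: because $C_0 = C_\Sigma$, the assumption $C_\Sigma < \tilde C_0$ prevents $\lim_{\alpha\to 0}C_\alpha$ from agreeing with $C_0$.

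For continuity at $0$ under $C_\Sigma \geq C_\Omega + K_1$, I pass to the limit in the three arguments of the max in~\eqref{inequality_interpolation}, which tend to $C_\Omega + K_1$, $0$, and $\Cint C_\Sigma/\Cint = C_\Sigma$ respectively, giving $\limsup_{\alpha\to 0}C_\alpha \leq \max(C_\Omega + K_1, C_\Sigma) = C_\Sigma = C_0$. For the matching $\liminf$, I test $C_\alpha$ with any fixed $f \in \DomE_0$ satisfying $\int_\Sigma \|\bDeriv f\|^2\,d\lambda_\Sigma > 0$ and expand $\Var_{\lambda_\alpha}(f)$ via the three-piece decomposition used inside the proof of Proposition~\ref{prop:interp_method}; as $\alpha\to 0$ the ratio tends to the $\Sigma$-ratio, and supremizing over such $f$ gives $\varliminf_{\alpha\to 0}C_\alpha \geq C_\Sigma$. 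Continuity at $\alpha=1$ under $C_\Omega\geq K_2$ is perfectly symmetric: in~\eqref{inequality_interpolation} the three arguments of the max tend to $C_\Omega$, $K_2$, and $C_\Omega$ respectively, giving $\limsup_{\alpha\to 1}C_\alpha \leq C_\Omega=C_1$, while testing with any $C_\Omega$-near-optimal $f\in\DomE_0$ furnishes $\varliminf\geq C_\Omega$. No step is genuinely hard; the only technical care needed is to ensure that $\mC^1_0(\overline\Omega)$ functions are admissible test functions for $C_\alpha$, which follows from the assumed density of $\DomE_0$ in $\mC^1(\overline\Omega)$ together with the closure of $(\mathcal E_\alpha, \DomE_0)$ to $(\mathcal E_\alpha, \DomE)$.
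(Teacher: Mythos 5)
Your proof is correct and takes essentially the same route as the paper: the $\varliminf\geq\tilde C_0$ bound via $\mC^1_0(\overline\Omega)$ test functions, discontinuity as an immediate corollary, and continuity by matching the $\alpha\to 0,1$ limits of the three arguments of the max in~\eqref{inequality_interpolation} against a lower bound. The only cosmetic difference is that the paper obtains the matching $\varliminf\geq C_0$ (resp.\ $\varliminf\geq C_1$) by simply observing that $\alpha\mapsto C_\alpha$ is lower semicontinuous as a pointwise supremum of continuous functions, whereas you unpack that observation into the explicit test-function computation---the content is identical.
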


\begin{proof} 
To prove the first statement, take a non constant function $g \in \mC^1_{0}(\overline\Omega)$ and estimate 
  \begin{align*}
    \varliminf_{ \alpha \to 0 }C_{\alpha}&= \varliminf_{ \alpha \to 0 }\sup_{\substack {f \in \mC^1(\overline \Omega) \\ f \text{ non constant}}}  \frac{\Var_{\lambda_\alpha} f}{\e_\alpha (f)}\geq \varliminf_{ \alpha \to 0 }\frac{\Var_{\lambda_\alpha} g}{\e_\alpha (g)}\\
    &= \varliminf_{ \alpha \to 0 } \frac{ \alpha\Var_{\lambda_{\Omega}}g+(1-\alpha)\Var_{\lambda_{\Sigma}}g+\alpha(1-\alpha)\left( \int_{ \Omega }   gd \lambda_{\Omega}-\int_{ \Sigma }   gd \lambda_{\Sigma}   \right)^2 }{ \alpha \int_{ \Omega }   \|\Deriv g\|^2 d\lambda_{\Omega}+(1-\alpha)\int_{ \Sigma}   \|\bDeriv g\|^2 d \lambda_{\Sigma }   }.
  \end{align*}
 Since $g=0$ on $\Sigma$, we obtain
  \begin{align*}
    \varliminf_{ \alpha \to 0 }C_{\alpha}&\geq \varliminf_{ \alpha \to 0 } \frac{ \alpha\Var_{\lambda_{\Omega}}g+\alpha(1-\alpha)\left( \int_{ \Omega }   gd \lambda_{\Omega}  \right)^2 }{ \alpha \int_{ \Omega }   \|\Deriv g\|^2 d \lambda_{\Omega}  } = \frac{ \int_\Omega g^2 d\lambda_\Omega }{ \int_{ \Omega }   \|\Deriv g\|^2 d \lambda_{\Omega}  } .
  \end{align*}
Taking the supremum over $g \in \mC^1_0(\overline\Omega)$  yields the first statement. 
  
To prove the second assertion  note that $ \alpha \mapsto C_\alpha$ is the pointwise supremum of a family of continuous functions and therefore lower semi continuous. 
Thus $C_\Sigma=C_0 \leq \varliminf_{\alpha \to 0} C_\alpha$. 
If $C_\Sigma \geq C_\Omega+K_1$, the r.h.s. of inequality~\eqref{inequality_interpolation} converges to $C_\Sigma$ as $\alpha$ goes to 0, which implies that  $ \varlimsup_{\alpha \to 0} C_\alpha  \leq C_\Sigma$. Similarly, if $C_{\Omega} \geq K_2$, the r.h.s. of~\eqref{inequality_interpolation} converges to $C_\Omega$ as $\alpha$ goes to 1.
\end{proof}

\begin{remark}  For smooth enough boundary 
the constant $K_2$ can always be taken equal to zero, hence by proposition~\ref{prop:continuity_condition} continuity at $\alpha=1$ holds. An example where a phase transition appears at $\alpha=0$ is given in section~\ref{subsec:partial_ball}.  In the example of section~\ref{subsec:needle} it holds that $C_\Omega<K_2$ but continuity of at $\alpha =1$ can  be established via Mosco-convergence \cite{MR1283033} of the associated Dirichlet forms, see also \cite{MR3154581}.
\end{remark}

\providecommand{\bysame}{\leavevmode\hbox to3em{\hrulefill}\thinspace}
\providecommand{\MR}{\relax\ifhmode\unskip\space\fi MR }
\providecommand{\MRhref}[2]{%
	\href{http://www.ams.org/mathscinet-getitem?mr=#1}{#2}
}
\providecommand{\href}[2]{#2}


\begin{thebibliography}{10}
	
	\bibitem{Agmon:1959}
	S.~Agmon, A.~Douglis, and L.~Nirenberg, \emph{Estimates near the boundary for
		solutions of elliptic partial differential equations satisfying general
		boundary conditions. {I}}, Comm. Pure Appl. Math. \textbf{12} (1959),
	623--727. \MR{125307}
	
	\bibitem{MR4096131}
	Alexander Aurell and Boualem Djehiche, \emph{Behavior near walls in the
		mean-field approach to crowd dynamics}, SIAM J. Appl. Math. \textbf{80}
	(2020), no.~3, 1153--1174. \MR{4096131}
	
	\bibitem{Bakry:2014}
	Dominique Bakry, Ivan Gentil, and Michel Ledoux, \emph{Analysis and geometry of
		{M}arkov diffusion operators}, Grundlehren der Mathematischen Wissenschaften
	[Fundamental Principles of Mathematical Sciences], vol. 348, Springer, Cham,
	2014. \MR{3155209}
	
	\bibitem{Beckner:1993}
	William Beckner, \emph{Sharp {S}obolev inequalities on the sphere and the
		{M}oser-{T}rudinger inequality}, Ann. of Math. (2) \textbf{138} (1993),
	no.~1, 213--242. \MR{1230930}
	
	\bibitem{MR1978428}
	Rodney~Josu\'{e} Biezuner, \emph{Best constants in {S}obolev trace
		inequalities}, Nonlinear Anal. \textbf{54} (2003), no.~3, 575--589.
	\MR{1978428}
	
	\bibitem{MR1971310}
	Juli\'{a}n~Fern\'{a}ndez Bonder, Julio~D. Rossi, and Ra\'{u}l Ferreira,
	\emph{Uniform bounds for the best {S}obolev trace constant}, Adv. Nonlinear
	Stud. \textbf{3} (2003), no.~2, 181--192. \MR{1971310}
	
	\bibitem{MR245085}
	Jean-Michel Bony, Philippe Courr\`ege, and Pierre Priouret, \emph{Semi-groupes
		de {F}eller sur une vari\'{e}t\'{e} \`a bord compacte et probl\`emes aux
		limites int\'{e}gro-diff\'{e}rentiels du second ordre donnant lieu au
		principe du maximum}, Ann. Inst. Fourier (Grenoble) \textbf{18} (1968),
	no.~fasc. 2, 369--521 (1969). \MR{245085}
	
	\bibitem{bormann2025cheegertypeinequalitydriftlaplacian}
	Marie Bormann, \emph{A {C}heeger-type inequality for the drift {L}aplacian with
		{W}entzell-type boundary condition}, preprint, arXiv:2503.16093 (2025).
	
	\bibitem{Bormann2026Functional}
	Marie Bormann, \emph{Functional {I}nequalities for {D}oubly {W}eighted
		{B}rownian {M}otion with {S}ticky-reflecting {B}oundary {D}iffusion},
	Potential Anal. \textbf{64} (2026), no.~3, Paper No. 47. \MR{5036074}
	
	\bibitem{BormannMonsaingeonRengerVonRenesse2025}
	Marie Bormann, Léonard Monsaingeon, D.~R.~Michiel Renger, and Max von Renesse,
	\emph{A gradient flow that is none: {H}eat flow with {W}entzell boundary
		condition}, preprint, arXiv:2506.22093 (2025).
	
	\bibitem{Bormann2026StickyReflecting}
	Marie Bormann, Max von Renesse, and Feng-Yu Wang, \emph{Functional inequalities
		for {B}rownian motion on manifolds with sticky-reflecting boundary
		diffusion}, Probab. Theory Related Fields \textbf{194} (2026), no.~1-2,
	205--244. \MR{5008804}
	
	\bibitem{BucurFreitasKennedy2017}
	Dorin Bucur, Pedro Freitas, and James Kennedy, \emph{The {R}obin problem},
	Shape optimization and spectral theory, De Gruyter Open, Warsaw, 2017,
	pp.~78--119. \MR{3681148}
	
	\bibitem{CasterasMonsaingeonNenna2025}
	Jean-Baptiste Casteras, Leonard Monsaingeon, and Luca Nenna, \emph{Large
		deviations for sticky-reflecting {B}rownian motion with boundary diffusion},
	preprint, arXiv:2501.11394 (2025).
	
	\bibitem{CasterasMonsaingeonSantambrogio2024}
	Jean-Baptiste Casteras, L\'eonard Monsaingeon, and Filippo Santambrogio,
	\emph{Sticky-reflecting diffusion as a {W}asserstein gradient flow}, J. Math.
	Pures Appl. (9) \textbf{199} (2025), Paper No. 103721, 32. \MR{4901547}
	
	\bibitem{DambrineKatebLamboley2016Wentzell}
	M.~Dambrine, D.~Kateb, and J.~Lamboley, \emph{An extremal eigenvalue problem
		for the {W}entzell-{L}aplace operator}, Ann. Inst. H. Poincar\'e{} C Anal.
	Non Lin\'eaire \textbf{33} (2016), no.~2, 409--450. \MR{3465381}
	
	\bibitem{MR2198199}
	Jean-Dominique Deuschel, Giambattista Giacomin, and Lorenzo Zambotti,
	\emph{Scaling limits of equilibrium wetting models in {$(1+1)$}-dimension},
	Probab. Theory Related Fields \textbf{132} (2005), no.~4, 471--500.
	\MR{2198199}
	
	\bibitem{escobar}
	Jos\'{e}~F. Escobar, \emph{The geometry of the first non-zero {S}tekloff
		eigenvalue}, J. Funct. Anal. \textbf{150} (1997), no.~2, 544--556.
	\MR{1479552}
	
	\bibitem{MR3498008}
	Torben Fattler, Martin Grothaus, and Robert Vo\ss{}hall, \emph{Construction and
		analysis of a sticky reflected distorted {B}rownian motion}, Ann. Inst. Henri
	Poincar\'{e} Probab. Stat. \textbf{52} (2016), no.~2, 735--762. \MR{3498008}
	
	\bibitem{MR3299025}
	Vincenzo Ferone, Carlo Nitsch, and Cristina Trombetti, \emph{On a conjectured
		reverse {F}aber-{K}rahn inequality for a {S}teklov-type {L}aplacian
		eigenvalue}, Commun. Pure Appl. Anal. \textbf{14} (2015), no.~1, 63--82.
	\MR{3299025}
	
	\bibitem{MR3662010}
	Alexandre Girouard and Iosif Polterovich, \emph{Spectral geometry of the
		{S}teklov problem (survey article)}, J. Spectr. Theory \textbf{7} (2017),
	no.~2, 321--359. \MR{3662010}
	
	\bibitem{MR2215623}
	Gis\`ele~Ruiz Goldstein, \emph{Derivation and physical interpretation of
		general boundary conditions}, Adv. Differential Equations \textbf{11} (2006),
	no.~4, 457--480. \MR{2215623}
	
	\bibitem{MR4065110}
	Gis\`ele~Ruiz Goldstein, Jerome~A. Goldstein, Davide Guidetti, and Silvia
	Romanelli, \emph{Maximal regularity, analytic semigroups, and dynamic and
		general {W}entzell boundary conditions with a diffusion term on the
		boundary}, Ann. Mat. Pura Appl. (4) \textbf{199} (2020), no.~1, 127--146.
	\MR{4065110}
	
	\bibitem{nonnenmacher2018overdamped}
	Martin Grothaus and Andreas Nonnenmacher, \emph{Overdamped limit of generalized
		stochastic {H}amiltonian systems for singular interaction potentials}, J.
	Evol. Equ. \textbf{20} (2020), no.~2, 577--605. \MR{4105111}
	
	\bibitem{grothaus}
	Martin Grothaus and Robert Vo\ss{}hall, \emph{Stochastic differential equations
		with sticky reflection and boundary diffusion}, Electron. J. Probab.
	\textbf{22} (2017), Paper No. 7, 37. \MR{3613700}
	
	\bibitem{MR126883}
	Nobuyuki Ikeda, \emph{On the construction of two-dimensional diffusion
		processes satisfying {W}entzell's boundary conditions and its application to
		boundary value problems}, Mem. Coll. Sci. Univ. Kyoto Ser. A. Math.
	\textbf{33} (1960/61), 367--427. \MR{126883}
	
	\bibitem{Kennedy2010RobinWentzell}
	James~B. Kennedy, \emph{On the isoperimetric problem for the {L}aplacian with
		{R}obin and {W}entzell boundary conditions}, Bull. Aust. Math. Soc.
	\textbf{82} (2010), no.~2, 348--350. \MR{2685158}
	
	\bibitem{kolesnikov}
	Alexander~V. Kolesnikov and Emanuel Milman, \emph{Brascamp-{L}ieb-type
		inequalities on weighted {R}iemannian manifolds with boundary}, J. Geom.
	Anal. \textbf{27} (2017), no.~2, 1680--1702. \MR{3625169}
	
	\bibitem{konarovskyi2017reversible}
	Vitalii Konarovskyi and Max-K. von Renesse, \emph{Reversible
		coalescing-fragmentating {W}asserstein dynamics on the real line}, J. Funct.
	Anal. \textbf{286} (2024), no.~8, Paper No. 110342, 60. \MR{4704529}
	
	\bibitem{MR1443055}
	Yanyan Li and Meijun Zhu, \emph{Sharp {S}obolev trace inequalities on
		{R}iemannian manifolds with boundaries}, Comm. Pure Appl. Math. \textbf{50}
	(1997), no.~5, 449--487. \MR{1443055}
	
	\bibitem{MR1283033}
	Umberto Mosco, \emph{Composite media and asymptotic {D}irichlet forms}, J.
	Funct. Anal. \textbf{123} (1994), no.~2, 368--421. \MR{1283033}
	
	\bibitem{MR3154581}
	Delio Mugnolo, Robin Nittka, and Olaf Post, \emph{Norm convergence of sectorial
		operators on varying {H}ilbert spaces}, Oper. Matrices \textbf{7} (2013),
	no.~4, 955--995. \MR{3154581}
	
	\bibitem{reilly}
	Robert~C. Reilly, \emph{Applications of the {H}essian operator in a
		{R}iemannian manifold}, Indiana Univ. Math. J. \textbf{26} (1977), no.~3,
	459--472. \MR{474149}
	
	\bibitem{MR2384747}
	Julio~D. Rossi, \emph{First variations of the best {S}obolev trace constant
		with respect to the domain}, Canad. Math. Bull. \textbf{51} (2008), no.~1,
	140--145. \MR{2384747}
	
	\bibitem{MR3951758}
	Abdolhakim Shouman, \emph{Generalization of {P}hilippin's results for the first
		{R}obin eigenvalue and estimates for eigenvalues of the bi-drifting
		{L}aplacian}, Ann. Global Anal. Geom. \textbf{55} (2019), no.~4, 805--817.
	\MR{3951758}
	
	\bibitem{Shubin}
	M.~A. Shubin, \emph{Pseudodifferential operators and spectral theory}, second
	ed., Springer-Verlag, Berlin, 2001, Translated from the 1978 Russian original
	by Stig I. Andersson. \MR{1852334}
	
	\bibitem{MR4176673}
	Kazuaki Taira, \emph{Boundary value problems and {M}arkov processes}, third
	ed., Lecture Notes in Mathematics, vol. 1499, Springer, Cham, [2020]
	\copyright 2020, Functional analysis methods for Markov processes.
	\MR{4176673}
	
	\bibitem{MR929208}
	Satoshi Takanobu and Shinzo Watanabe, \emph{On the existence and uniqueness of
		diffusion processes with {W}entzell's boundary conditions}, J. Math. Kyoto
	Univ. \textbf{28} (1988), no.~1, 71--80. \MR{929208}
	
	\bibitem{Ullrich:2025}
	Anton Ullrich, \emph{The heat flow on glued spaces with varying dimension},
	preprint, arXiv:2406.09996 (2025).
	
	\bibitem{MR121855}
	A.~D. Ventcel, \emph{On boundary conditions for multi-dimensional diffusion
		processes}, Theor. Probability Appl. \textbf{4} (1959), 164--177. \MR{121855}
	
	\bibitem{wang2025superweakpoincareinequalities}
	Feng-Yu Wang, \emph{Super and weak poincar\'e inequalities for sticky-reflected
		diffusion processes}, preprint, arXiv:2508.18846 (2025).
	
	\bibitem{MR287612}
	Shinzo Watanabe, \emph{On stochastic differential equations for
		multi-dimensional diffusion processes with boundary conditions. {II}}, J.
	Math. Kyoto Univ. \textbf{11} (1971), 545--551. \MR{287612}
	
	\bibitem{XiaWang2018}
	Changyu Xia and Qiaoling Wang, \emph{Eigenvalues of the {W}entzell-{L}aplace
		operator and of the fourth order {S}teklov problems}, J. Differential
	Equations \textbf{264} (2018), no.~10, 6486--6506. \MR{3770056}
	
\end{thebibliography}
\end{document}